\tikzset{decoration={snake, amplitude=.4mm,segment length=2mm}}
\newtheorem{thm}{Theorem}[section]
\newtheorem{lem}[thm]{Lemma}
\newtheorem{prop}[thm]{Proposition}
\theoremstyle{definition}
\newtheorem{defn}[thm]{Definition}
\theoremstyle{remark}
\newtheorem*{rmk}{Remark}
\newtheorem{obs}[thm]{Observation}
\begin{document}

\title{Eigenvalues and parity factors in graphs}
\author{
	Donggyu Kim\thanks{Discrete Mathematics Group, IBS, Korea, Daejeon, 34126 and Department of Mathematical Sciences, KAIST, Korea, Daejeon, 34141, donggyu@kaist.ac.kr. Research supported by the Institute for Basic Science (IBS-R029-C1).}
	\,and
	Suil O\thanks{Department of Applied Mathematics and Statistics, The State University of New York, Korea, Incheon, 21985, suil.o@sunykorea.ac.kr. Research supported by NRF-2020R1F1A1A01048226 and  NRF-2021K2A9A2A06044515}
}

\maketitle

\begin{abstract}

Let $G$ be a graph and let $g, f$ be nonnegative integer-valued functions defined on $V(G)$ such that $g(v) \le f(v)$ and $g(v) \equiv f(v) \pmod{2}$ for all $v \in V(G)$.
A $(g,f)$-parity factor of $G$ is a spanning subgraph $H$ such that for each vertex $v \in V(G)$, $g(v) \le d_H(v) \le f(v)$ and $f(v)\equiv d_H(v) \pmod{2}$.
We prove sharp upper bounds for certain eigenvalues in an $h$-edge-connected graph $G$ with given minimum degree to guarantee the existence of a $(g,f)$-parity factor; we provide graphs showing that the bounds are optimal.
This result extends the recent one of the second author~\cite{O2022}, extending the one of Gu~\cite{Gu2014}, Lu~\cite{Lu2010},  
Bollb{\'a}s, Saito, and Wormald~\cite{Bollobas1985}, and Gallai~\cite{Gallai1950}.\\

\noindent
\textbf{Keywords:} Parity factors, $(g,f)$-factors, eigenvalues \\

\noindent
\textbf{AMS subject classification 2010:} 05C50, 05C70
\end{abstract}

\section{Introduction}\label{sec:intro}

For a graph $G$ and non-negative integer-valued functions $g, f$ defined on the vertex set $V(G)$ %
with $g(v) \leq f(v)$ for all $v \in V(G)$, a \emph{$(g,f)$-factor} of $G$ is a spanning subgraph $H$ of $G$ such that %
for every vertex $v \in V(G)$, $g(v) \le d_H(v) \le f(v)$.
An $f$-factor is an $(f,f)$-factor.
For $g$ and $f$ with $g(v)\equiv f(v) \pmod{2}$ for every $v\in V(G)$, a \emph{$(g,f)$-parity factor} of $G$ is a $(g,f)$-factor such that $d_H(v) \equiv f(v) \pmod{2}$ for all $v \in V(G)$.
For integers $a$ and $b$, an $[a,b]$-factor of $G$ is a $(g,f)$-factor such that $g(v) = a$ and $f(v) = b$ for all $v\in V(G)$, and a $k$-factor is a $[k,k]$-factor.
For odd (or even, respectively) integers $a$ and $b$, an odd (or even, respectively) $[a,b]$-factor is a $(g,f)$-parity factor with $g(v)=a$ and $f(v)=b$ for all $v\in V(G)$.
The eigenvalues of a graph are the eigenvalues of its adjacency matrix.

In 1891, Petersen~\cite{Petersen1891} proved that every $2$-edge-connected cubic graph has a $1$-factor, and for arbitrary integers $k$ and $r$ with $0\leq k \leq r$, every $2r$-regular graph has a $2k$-factor.
Hall~\cite{Hall1948} investigated a necessary and sufficient condition for a bipartite graph to have a $1$-factor. This is called the Hall's marriage theorem.
Tutte~\cite{Tutte1947} proved a necessary and sufficient condition for a graph to guarantee the existence of a $1$-factor and extended the result to an $f$-factor~\cite{Tutte1952}.
Lov\'{a}sz~\cite{Lovasz1970a} obtained necessary and sufficient conditions for a graph to have a $(g,f)$-factor or a $(g,f)$-parity factor.
It is however hard to check equivalent conditions of the existence of factors in graph. Thus researchers tried to investigate simple conditions ensuring the existence of factors in graphs.

One approach to ensure a certain factor is to consider certain edge-connectivity in a graph, like the Petersen's theorem.
Gallai~\cite{Gallai1950} %
investigated some sufficient conditions for a regular graph with a certain edge-connectivity to have a $k$-factor, which appeared before the Tutte's $f$-factor theorem.
Bollob\'{a}s, Saito, and Wormald~\cite{Bollobas1985} slightly improved the Gallai's result.
Kano~\cite{Kano1985} proved some sufficient conditions for a graph to have a $(g,f)$-factor, which implies the result of Bollob\'{a}s, Saito, and Wormald.

\begin{thm}[\cite{Kano1985}]\label{thm:Kano}
	Let $G$ be an $h$-edge-connected graph (allowing loops and multiple edges), and let $\theta$ be a real number with $0\leq \theta \leq 1$.
	Let $g$ and $f$ be integer-valued functions on $V(G)$ such that $g(v) \leq \theta d_G(v) \leq f(v)$ for all $v\in V(G)$, and $\sum_{v\in V(G)} f(v)$ is even.
	Let $W = \{v \in V(G) : g(v) = f(v)\}$, and let $h_e$ and $h_o$ be even and odd integers in $\{h,h+1\}$, respectively.
	If one of the following holds, then $G$ has a $(g,f)$-factor.
	\begin{enumerate}[label=\rm(\alph*)]
		\item $h\theta \geq 1$ and $h(1-\theta) \geq 1$.
		\item $d_G(v)$ and $f(v)$ are even for all $v\in W$.
		\item $d_G(v)$ is even for all $v\in W$, and $h_e \theta \geq 1$ and $h_e (1-\theta) \geq 1$.
		\item $f(v)$ is even for all $v\in W$, and $h_o (1-\theta) \geq 1$.
		\item $d_G(v)$ and $f(v)$ are odd for all $v\in W$, and $h_o \theta \geq 1$.
	\end{enumerate}
\end{thm}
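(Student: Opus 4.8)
The plan is to invoke Lov\'asz's deficiency criterion for $(g,f)$-factors~\cite{Lovasz1970a}. Writing $U = V(G)\setminus(S\cup T)$ and letting $q(S,T)$ be the number of components $C$ of $G-S-T$ with $V(C)\subseteq W$ for which $\sum_{v\in V(C)} f(v) + e_G(V(C),T)$ is odd (an \emph{odd component}), the criterion states that $G$ has a $(g,f)$-factor if and only if
$$\delta(S,T) := \sum_{v\in S} f(v) + \sum_{v\in T}\bigl(d_{G-S}(v) - g(v)\bigr) - q(S,T) \ge 0$$
for every pair of disjoint $S,T\subseteq V(G)$. I would assume for contradiction that some disjoint $S,T$ satisfy $\delta(S,T) < 0$, hence $\delta(S,T)\le -1$ since $\delta$ is integer-valued, and aim to prove $\delta(S,T)\ge 0$ in every case.

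The first main step is an exact rewriting of $\delta(S,T)$ that exploits $g(v)\le \theta d_G(v)\le f(v)$. Substituting $f(v) = \theta d_G(v) + (f(v)-\theta d_G(v))$ on $S$ and $g(v) = \theta d_G(v) - (\theta d_G(v)-g(v))$ on $T$, and expanding each degree into edges inside $S$, inside $T$, across to $T$, and across to $U$, the cross term $e_G(S,T)$ cancels because $\theta + (1-\theta) - 1 = 0$. Writing $e(S),e(T)$ for the numbers of edges inside $S,T$ and splitting the $S$--$U$ and $T$--$U$ edges component by component, this gives
\begin{align*}
\delta(S,T) = {}& \sum_{v\in S}\bigl(f(v)-\theta d_G(v)\bigr) + \sum_{v\in T}\bigl(\theta d_G(v)-g(v)\bigr) + 2\theta e(S) + 2(1-\theta)e(T) \\
&{}+ \sum_{C}\bigl(\theta\, e_G(S,C) + (1-\theta)\,e_G(T,C)\bigr) - q(S,T),
\end{align*}
where $C$ ranges over the components of $G-S-T$. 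Every term except $-q(S,T)$ is nonnegative, and each odd component subtracts exactly $1$, so it suffices to show
$$\theta\, e_G(S,C) + (1-\theta)\,e_G(T,C) \ge 1$$
for each odd component $C$. The degenerate case $S=T=\emptyset$ is dispatched at once: the components are those of the (connected) graph $G$, each has $e_G(V(C),T)=0$, and $\sum_{v} f(v)$ is even by hypothesis, so $q(\emptyset,\emptyset)=0$ and $\delta=0$.

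For $S\cup T\neq\emptyset$, each odd component $C$ has nonempty edge boundary forming an edge cut, so $h$-edge-connectivity gives $e_G(S,C)+e_G(T,C)\ge h$. The remaining input is parity, read off from $\sum_{v\in V(C)} d_G(v) = 2e(C) + e_G(S,C) + e_G(T,C)$ together with the oddness condition. Under (a), the elementary bound $\theta a + (1-\theta)b \ge \min(\theta,1-\theta)(a+b)$ with $a+b\ge h$ finishes via $h\theta\ge1$ and $h(1-\theta)\ge1$. Under (b), $d_G$ and $f$ even on $V(C)$ force both $e_G(S,C)$ and $e_G(T,C)$ odd, hence both $\ge 1$, giving $\ge \theta+(1-\theta)=1$ with no connectivity needed. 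Under (c), $d_G$ even on $V(C)$ makes $e_G(S,C)+e_G(T,C)$ even, improving connectivity to $\ge h_e$, and $h_e\theta\ge1$, $h_e(1-\theta)\ge1$ finish as in (a).

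Cases (d) and (e) are the crux, since only a one-sided hypothesis ($h_o(1-\theta)\ge1$, resp.\ $h_o\theta\ge1$) is available. Here $f$ even on $V(C)$ forces $e_G(T,C)$ odd in (d), while $d_G,f$ odd on $V(C)$ force $e_G(S,C)$ odd in (e). I would then minimize $\theta\,e_G(S,C)+(1-\theta)\,e_G(T,C)$ over nonnegative integers whose sum is $\ge h$, with the indicated coordinate odd, splitting on whether $\theta\le\tfrac12$: in the regime where the odd coordinate is also the cheaper one, minimizing drives it down to its least odd value that is $\ge h$, namely $h_o$, and the one-sided hypothesis yields $\ge 1$; in the complementary regime the bound $\ge 1$ holds for free once $h\ge 2$, while $h\le 1$ forces $\theta\in\{0,1\}$ through the hypothesis and is checked directly. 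Carrying out this minimization cleanly --- tracking which coordinate carries the forced odd parity and matching it against the correct one of $h_e,h_o$ --- is the main obstacle; the rest is bookkeeping. Once the per-component bound holds, the rewritten identity forces $\delta(S,T)\ge 0$, contradicting $\delta(S,T)<0$, so $G$ has a $(g,f)$-factor.
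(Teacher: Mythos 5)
The paper contains no proof of Theorem~\ref{thm:Kano}: it is imported verbatim from Kano's 1985 paper (with only the remark that it extends to graphs with loops), so the only meaningful comparison is with the machinery the paper uses for its own Theorem~\ref{thm:main}. Your proposal is essentially that machinery transposed to the non-parity setting: Lov\'asz's deficiency criterion, the exact $\theta$-weighted rewriting of $\delta(S,T)$ in which the cross term $e_G(S,T)$ cancels, and a per-odd-component lower bound $\theta\, e_G(S,C)+(1-\theta)\, e_G(T,C)\ge 1$ extracted from parity plus $h$-edge-connectivity. Your identity for $\delta(S,T)$ is correct, your parity deductions are correct in every case (in particular, in (e), $e_G(S,C)\equiv d_G(V(C))-e_G(T,C)\equiv |V(C)|-(1+|V(C)|)\equiv 1\pmod 2$, using that odd components lie entirely in $W$), and cases (a), (b), (c) are finished exactly as you say.

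The step you single out as ``the main obstacle'' in (d) and (e) is where your write-up is both incomplete and slightly inaccurate, though the plan does close: in the regime where the odd coordinate is the cheaper one, the minimum of the linear objective over the minimal feasible points need not sit at $(0,h_o)$ (for $h$ even the corner with the odd coordinate equal to $h-1$ and the other equal to $1$ can be cheaper), so ``drives it down to $h_o$'' is not literally right; it just happens that every such corner is $\ge 1$ anyway, which one must check. The clean split is not on $\theta\le\tfrac12$ but on whether the coordinate \emph{not} forced odd vanishes. In (d): if $e_G(S,C)\ge 1$, then since $e_G(T,C)\ge 1$ one gets $\theta e_G(S,C)+(1-\theta)e_G(T,C)\ge\theta+(1-\theta)=1$ with no connectivity or hypothesis at all; if $e_G(S,C)=0$, then $e_G(T,C)$ is the entire edge cut around $C$, hence $\ge h$ and odd, hence $\ge h_o$, and $(1-\theta)h_o\ge 1$ finishes. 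Case (e) is the mirror image with $S,T$ and $\theta,1-\theta$ exchanged, using $\theta h_o\ge 1$. This also makes your separate $h\le 1$ discussion unnecessary. It is precisely the device the paper uses at the corresponding point of its own proof (in case~\ref{item:m4} it keeps only the components with $|[S,V(C_i)]|=0$, where $|[T,V(C_i)]|\ge h_o$, and discards the rest). With that substitution, your proof is complete.
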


Theorem~\ref{thm:Kano} was originally stated for graphs with multiple edges and without loops in~\cite{Kano1985}, but it also holds for graphs allowing multiple edges and loops.

The preceding theorem deduces a sufficient condition for a graph $G$ to have a $(g,f)$-parity factor, where $\sum_{v\in V(G)} f(v)$ is even.
Let $G'$ be the graph obtained from $G$ by attaching $\frac{f(v)-g(v)}{2}$ loops for each vertex $v\in V(G)$, where each loop at $v\in V(G')$ contributes~$2$ to the degree at $v$.
Then $G$ has a $(g,f)$-parity factor if and only if $G'$ has a $f$-factor. Therefore, one may obtain a sufficient condition for $G$ to have a $(g,f)$-parity factor from Theorem~\ref{thm:Kano}; see Chen~{\cite[Lemma~5.2]{Chen1996}}.

Another approach is to bound eigenvalues of graphs in order to ensure the existence of certain factors in graphs.
Brouwer and Haemers~\cite{Brouwer2005} proved that if the third largest eigenvalue of a regular graph with even number of vertices is small enough in terms of the degree, then it has a $1$-factor.
Cioab\u{a}, Gregory, and Haemers~\cite{Cioaba2009} improved the bound, and Lu~\cite{Lu2010} extended this result to a $k$-factor.
Gu~\cite{Gu2014} proved that an $h$-edge-connected $r$-regular graph has a $k$-factor if a certain  eigenvalue is small in terms of $r$ and $h$. Very recently, O~\cite{O2022} found an eigenvalue condition for an $h$-edge-connected $r$-regular graph to have an odd or even $[a,b]$-factor.

We investigate sufficient conditions for a graph to have a $(g,f)$-parity factor in terms of the minimum degree, edge-connectivity, and eigenvalues. This extends the results of Kano~\cite{Kano1985} and O~\cite{O2022}.
For positive integers $r$ and $\eta$ with $r>\eta$, we define
\begin{equation*}
	\rho(r,\eta) = 
	\begin{cases}
		\frac{1}{2}(r-2+\sqrt{(r+2)^2-8\lfloor \eta/2 \rfloor}) & \textrm{if at least one of $r$ and $\eta$ is even}, \\
		\frac{1}{2}(r-3+\sqrt{(r+3)^2-4\eta}) & \textrm{if both $r$ and $\eta$ are odd with $\eta \geq 3$}, \\ 
		\mu & \text{if $r$ is odd and $\eta=1$},
	\end{cases}
\end{equation*}
where $\mu$ is the largest real root of $x^3-(r-2)x^2-2rx+r-1 = 0$.

\begin{thm}\label{thm:main}
	Let $G$ be an $h$-edge-connected graph, and let $\theta$ be a real number with $0 < \theta < 1$.
	Suppose that $g$ and $f$ are integer-valued functions on $V(G)$ such that $g(v) \leq \theta d_G(v) \leq f(v)$ and $g(v) \equiv f(v) \pmod{2}$ for all $v\in V(G)$, and $\sum_{v\in V(G)} f(v)$ is even.
	Let $\theta^* = \min\{\theta,1-\theta\}$, and $h_e$ and $h_o$ be even and odd integers in $\{h,h+1\}$, respectively.
	If one of \ref{item:m1}-\ref{item:m5} holds, then $G$ has a $(g,f)$-parity factor.
	\begin{enumerate}[label=\rm(\alph*)]
		\item\label{item:m1}
		\begin{enumerate}[label=\rm\roman*.]
		    \item $h \geq 1/\theta^*$, or
		    \item $h < 1/\theta^* \leq \delta(G)$ and $\lambda_{\left\lceil \frac{2}{1-\theta^* h} \right\rceil} (G)
				< \rho(\delta(G), \lceil 1/\theta^* \rceil -1)$.
		\end{enumerate}
		
		\item\label{item:m2} $d_G(v)$ and $f(v)$ are even for all $v\in V(G)$.
		
		\item\label{item:m3} $d_G(v)$ is even for all $v\in V(G)$, and one of the following holds.
		\begin{enumerate}[label=\rm\roman*.]
		    \item $h_e \geq 1/\theta^*$.
		    \item $h_e < 1/\theta^* \leq \delta(G)$, and $\lambda_{\left\lceil \frac{2}{1-\theta^* h_e} \right\rceil} (G)
				< \rho(\delta(G), \lceil 1/\theta^* \rceil -1)$.
		\end{enumerate}
		
		\item\label{item:m4} $f(v)$ is even for all $v\in V(G)$, and one of the following holds.
        \begin{enumerate}[label=\rm\roman*.]
            \item $h_o \geq 1/(1-\theta)$.
            \item $h_o < 1/(1-\theta) \leq \delta(G)$ and $\lambda_{\left\lceil \frac{2}{1-(1-\theta) h_o} \right\rceil} (G)
				< \rho(\delta(G), \lceil 1/(1-\theta) \rceil -1)$.
        \end{enumerate}

		\item\label{item:m5}
		$d_G(v) \equiv f(v) \pmod{2}$ for all $v\in V(G)$, and one of the following holds.
		\begin{enumerate}[label=\rm\roman*.]
		    \item $h_o \geq 1/\theta$.
		    \item $h_o < 1/\theta \leq \delta(G)$ and $\lambda_{\left\lceil \frac{2}{1-\theta h_o} \right\rceil} (G)
				< \rho(\delta(G), \lceil 1/\theta \rceil -1)$.
		\end{enumerate}
		
	\end{enumerate}
\end{thm}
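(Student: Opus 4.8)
Here I describe how I would approach Theorem~\ref{thm:main}.

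My plan is to reduce everything to Kano's theorem (Theorem~\ref{thm:Kano}) via the loop construction described just after it, and to treat the spectral hypotheses as a device that upgrades the given edge-connectivity. Recall that attaching $\tfrac{f(v)-g(v)}{2}$ loops at each vertex gives a graph $G'$ with $\kappa'(G')=\kappa'(G)$ in which a $(g,f)$-parity factor of $G$ becomes an $f$-factor; combined with the relevant clause of Theorem~\ref{thm:Kano}, this yields a ready-made sufficient condition for a $(g,f)$-parity factor once the edge-connectivity is large enough. Concretely, in case~\ref{item:m1} set $k=\lceil 1/\theta^*\rceil$ and $\eta=k-1$ (and analogously $k=\lceil 1/\theta\rceil$ or $\lceil 1/(1-\theta)\rceil$, with $h_e$ or $h_o$ in place of $h$, in cases~\ref{item:m2}-\ref{item:m5}). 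The parity hypotheses of \ref{item:m1}-\ref{item:m5} are arranged to line up exactly with clauses (a)-(e) of Theorem~\ref{thm:Kano} on $G'$, so the first subcase of each item (where $h$, $h_e$, or $h_o$ already meets the threshold $1/\theta^*$, $1/\theta$, or $1/(1-\theta)$) gives the factor with no further work. The entire content therefore lies in the second subcases, where the graph is only $h$-edge-connected with $h$ below the threshold and the spectral bound must compensate.

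For a second subcase I argue by contradiction: suppose $G$ has no $(g,f)$-parity factor although $\lambda_t(G)<\rho(\delta(G),\eta)$ with $t=\lceil \tfrac{2}{1-\theta^* h}\rceil$ (respectively the stated index). Passing to $G'$ and applying the deficiency form of Tutte's $f$-factor theorem produces a barrier: a set $S\subseteq V(G)$ such that the components of $G-S$ split into \emph{good} and \emph{bad} ones, with total deficiency positive. A bad component $C$ is one whose edge boundary to $S$ is small; the barrier inequalities together with the bound $g(v)\le \theta d_G(v)\le f(v)$ show this boundary is at most $\eta=k-1$, while $h$-edge-connectivity forces it to be at least $h$. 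The crucial step is to convert the positive deficiency into a lower bound on the number of bad components. Balancing the per-component deficiency against the boundary count $h$ yields at least $t=\lceil \tfrac{2}{1-\theta^* h}\rceil$ bad components, each inducing a subgraph of $G$ with minimum degree $\delta(G)$ and at most $\eta$ edges leaving it; the factor $\tfrac{2}{1-\theta^*h}$ comes out of exactly this accounting.

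Finally I invoke eigenvalue interlacing. The bad components $C_1,\dots,C_t$ are pairwise non-adjacent, being distinct components of $G-S$, so the principal submatrix of $A(G)$ on $C_1\cup\cdots\cup C_t$ is block-diagonal with blocks $A(G[C_i])$; its $t$-th largest eigenvalue is at least $\min_i \lambda_1(G[C_i])$, and Cauchy interlacing gives $\lambda_t(G)\ge \min_i \lambda_1(G[C_i])$. It remains to check that every graph with minimum degree $\delta(G)$ and at most $\eta$ outgoing edges has spectral radius at least $\rho(\delta(G),\eta)$; the three cases in the definition of $\rho$ (and the cubic root when $\eta=1$) record the extremal configurations, which depend on the parities of $\delta(G)$ and $\eta$ and hence on which of \ref{item:m1}-\ref{item:m5} we are in. This produces $\lambda_t(G)\ge\rho(\delta(G),\eta)$, contradicting the hypothesis.

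I expect the main obstacle to be the counting step of the third paragraph: extracting, from the positive deficiency of the $f$-factor barrier together with $h$-edge-connectivity, the sharp lower bound of $t=\lceil \tfrac{2}{1-\theta^*h}\rceil$ pairwise non-adjacent bad components of boundary at most $\eta$. Getting the constant $t$ exactly right—rather than up to an additive slack—requires handling the parity cases of Kano separately and tracking how $\theta$ and $h$ enter the deficiency inequality, and it is also what makes the final interlacing bound tight enough to match the claimed optimal graphs. The verification that $\rho(\delta,\eta)$ lower-bounds the spectral radius of each piece is routine but case-heavy, following the extremal analysis already used in \cite{O2022} and \cite{Gu2014}.
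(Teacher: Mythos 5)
Your second half matches the paper: the paper also extracts $x=\bigl\lceil \frac{2}{1-\theta^* h}\bigr\rceil$ components $C_i$ with $|[V(C_i),S\cup T]|<1/\theta^*$, applies its extremal spectral bound (Lemma~\ref{lem:extreme}) to each, and concludes $\lambda_x(G)\geq\rho(\delta(G),\lceil 1/\theta^*\rceil-1)$ by interlacing; and routing the barrier through the loop construction rather than using Lov\'{a}sz's theorem (Theorem~\ref{thm:parityfactor}) directly, as the paper does, would be a cosmetic difference. But your first half has a structural error: the barrier you describe has the wrong shape. The deficiency form of Tutte's $f$-factor theorem (equivalently Lov\'{a}sz's parity theorem) produces a \emph{pair} of disjoint sets $(S,T)$, not a single set $S$; the deficiency is $q(S,T)-f(S)+g(T)-d_G(T)+|[S,T]|$, and the relevant components of $G\setminus(S\cup T)$ are those with $f(V(C))+|[T,V(C)]|$ odd. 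The asymmetric roles of $S$ and $T$ --- weighted $\theta$ and $1-\theta$ respectively after substituting $g\leq\theta d_G\leq f$ --- are exactly what generate the five cases: in \ref{item:m3}, even degrees force $|[S\cup T,V(C_i)]|$ even, hence $\geq h_e$; in \ref{item:m4}, even $f$ forces $|[T,V(C_i)]|$ odd, hence nonzero, and one must restrict to components with $|[S,V(C_i)]|=0$, whose $T$-boundary is then $\geq h_o$; and \ref{item:m5} is reduced to \ref{item:m3} by the substitution $g'=d_G-f$, $f'=d_G-g$. Your uniform prescription ``with $h_e$ or $h_o$ in place of $h$'' cannot be executed from a single-set barrier, because the odd components and all of these parity upgrades are defined through $T$.

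Second, ``total deficiency positive'' does not suffice for the stated eigenvalue index. The paper proves, by a parity computation using $g\equiv f\pmod 2$ and the evenness of $\sum_{v}f(v)$, that the deficiency is even, hence at least $2$; that $2$ is precisely the numerator in $\bigl\lceil\frac{2}{1-\theta^* h}\bigr\rceil$, and with deficiency merely $\geq 1$ your accounting yields only roughly $\bigl\lceil\frac{1}{1-\theta^* h}\bigr\rceil$ bad components, so the theorem as stated is out of reach. The counting step you defer as ``the main obstacle'' is genuinely required but short once the deficiency is $\geq 2$: if fewer than $x$ components had boundary $<1/\theta^*$, then $2\leq q-(q-x+1)-\theta^* h(x-1)=(1-\theta^* h)(x-1)<2$, a contradiction. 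Finally, the spectral verification you call routine is a substantial part of the paper (Lemma~\ref{lem:extreme} and its quotient-matrix analysis), and in the subcase $\eta=1$ with $\delta(G)$ odd the lemma needs the extra hypothesis that all but one vertex of the component has degree at least $\delta(G)$ inside it --- true here because at most one edge leaves the component, but it must be checked rather than absorbed into a generic ``minimum degree $\delta(G)$, at most $\eta$ outgoing edges'' statement.
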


Note that Theorem~\ref{thm:main}-\ref{item:m5} implies when both $d_G(v)$ and $f(v)$ are odd for every $v\in V(G)$.

Non-eigenvalue conditions~\ref{item:m1}-i, \ref{item:m2}, \ref{item:m3}-i, \ref{item:m4}-i, and \ref{item:m5}-i could be deduced from Theorem~\ref{thm:Kano} as mentioned earlier.
In Section~\ref{sec:tight}, we provide graphs showing that the eigenvalue conditions in Theorem~\ref{thm:main} are optimal.

We organize this paper as follows:
In Section~\ref{sec:prelim}, we present some terminologies from graph theory.
In Section~\ref{sec:tools}, we provide important tools like the Lov\'{a}sz's $(g,f)$-parity factor theorem and the (Quotient) Interlacing Theorem. Also lower bounds of eigenvalues of graphs with certain number of vertices and edges are determined, and graphs holding equalities in the bounds are provided.
Finally, we prove Theorem~\ref{thm:main} in Section~\ref{sec:main} and that the lower bounds of the eigenvalues in Theorem~\ref{thm:main} are best possible in Section~\ref{sec:tight}.

\section{Preliminaries}\label{sec:prelim}

For a positive integer $k$, let $[k] := \{1,\dots,k\}$.
For an $X\times Y$ matrix $A$, $X' \subseteq X$, and $Y' \subseteq Y$, let $A[X',Y']$ be the submatrix of $A$ indexed by $X'$ and $Y'$.
When $X = Y$ and $X' = Y'$, we may write $A[X']$ to denote $A[X',X']$.
For a real $x$, $\lfloor x \rfloor$ (or $\lceil x \rceil$ respectively) is the largest integer less than or equal to $x$ (or the smallest integer greater than or equal to $x$, respectively).

In this paper, every graph does not allow loops or parallel edges unless mentioned.
For a graph $G$ and a vertex subset $S$, $G[S]$ is the subgraph of $G$ induced by $S$.
We denote the degree of a vertex $v$ of $G$ by $d_G(v)$.
Let $d_G(S) := \sum_{v\in S} d_G(v)$.
For an integer-valued function~$f$ on $V(G)$, let $f(S) := \sum_{v\in S} f(v)$.
The \emph{complement} of $G$, denoted by $\overline{G}$, is a graph on $V(G)$ such that two distinct vertices $v$ and $w$ are adjacent in $\overline{G}$ if and only if they are not adjacent in $G$.
We denote the minimum degree and the maximum degree of $G$ by $\delta(G)$ and $\Delta(G)$, respectively.
For disjoint subsets $S$ and $T$ of $V(G)$, let $[S,T]_G$ be the set of edges whose one end is in $S$ and the other end is in $T$.
We may omit the subscript $G$ if no confusion arises.
We denote by $\kappa'(G)$ the edge-connectivity of $G$.

The adjacency matrix $A(G)$ of a graph $G$ is a $V(G)\times V(G)$ symmetric $\{0,1\}$-matrix, %
whose $(v,w)$-entry is $1$ if and only if $v$ and $w$ are adjacent in $G$.
The eigenvalues of~$G$ are the eigenvalues of $A(G)$.
For an $n\times n$ square matrix $A$ and an integer $i\in\{1,2,\dots,n\}$, we denote by $\lambda_i(A)$  the $i$-th largest eigenvalue of~$A$.
Namely, $\lambda_1(A) \geq \dots \geq \lambda_n(A)$ are all eigenvalues of $A$.
For $i \in [|V(G)|]$, we similarly denote by $\lambda_i(G)$ the $i$-th largest eigenvalue of~$G$.
Note that $\lambda_1(G)=\max_{i \in [n]}|\lambda_i(G)|$, the \emph{spectral radius} of $G$.

For a graph $G$ and a partition $\pi = \{W_1,\dots,W_m\}$ of $V(G)$, the \emph{quotient matrix} of $G$ with respect to $\pi$, denoted by $G/\pi$, is a $\pi \times \pi$ matrix whose $(W_i,W_j)$-entry is $|[W_i,W_j]_G| / |W_i|$.
We call $\pi$ an \emph{equitable} partition of $G$ if for each $i,j \in [m]$ (possibly, $i=j$), the number of neighbors of $v\in W_i$ in $W_j$ is constant, regardless of the choice of $v\in W_i$.
Equivalently, $\pi$ is equitable if and only if $|[\{v\},W_j]_G| = |[W_i,W_j]_G|/|W_j|$ for all $i,j\in[n]$ and $v\in W_i$.

For a graph $G$ and a positive integer $m$, we denote by $mG$ the disjoint union of $m$ copies of $G$.
For disjoint graphs $G_1, G_2, \dots, G_t$ with $t\geq 2$, let $G_1 \vee G_2 \vee \dots \vee G_t$ be a graph obtained from the union of $G_1, \dots, G_t$ by adding all possible edges between $G_i$ and $G_{i+1}$ for each $i\in[t-1]$.

\section{Tools}\label{sec:tools}

We first introduce two well-known theorems about parity factors and eigenvalues.

\begin{thm}[Lov\'{a}sz's $(g,f)$-parity factor theorem~\cite{Lovasz1970}; see Theorem~6.1 in~\cite{Akiyama2011}]
	\label{thm:parityfactor}
	Let $G$ be a graph on the vertex set $V$, and $g$ and $f$ be integer-valued functions on $V$ such that $g(v) \leq f(v)$ and $g(v) \equiv f(v) \pmod{2}$ for each $v\in V$.
	Then $G$ has a $(g,f)$-parity factor if and only if for all disjoint subsets $S$ and $T$ of $V$, it holds that
	\begin{equation*}
	  d_G(T) - g(T) + f(S) - |[S,T]_G| - q(S,T) \geq 0,
	\end{equation*}
	where $q(S,T)$ is the number of components $C$ in $G \setminus (S\cup T)$ such that $f(V(C)) + |[T,V(C)]_G|$ is odd.
\end{thm}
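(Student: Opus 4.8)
The plan is to deduce the theorem from Tutte's classical $f$-factor theorem~\cite{Tutte1952} via the loop reduction already recorded in the introduction. Given $G,g,f$, form $G'$ by attaching $\frac{f(v)-g(v)}{2}$ loops at each vertex $v$, where every loop contributes $2$ to the degree. The first step is to verify the equivalence that $G$ has a $(g,f)$-parity factor if and only if $G'$ has an $f$-factor (with respect to the same $f$). Indeed, from an $f$-factor $H'$ of $G'$ one deletes the loops to obtain a spanning subgraph $H$ of $G$ with $d_H(v)=f(v)-2k_v$, where $k_v\in\{0,\dots,\frac{f(v)-g(v)}{2}\}$ is the number of loops at $v$ used by $H'$; then $g(v)\leq d_H(v)\leq f(v)$ and $d_H(v)\equiv f(v)\pmod 2$, so $H$ is a $(g,f)$-parity factor. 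Conversely, a $(g,f)$-parity factor $H$ extends to an $f$-factor of $G'$ by switching on exactly $\frac{f(v)-d_H(v)}{2}$ loops at each $v$, a nonnegative integer at most $\frac{f(v)-g(v)}{2}$ by the range and parity constraints.

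Next I would apply the $f$-factor theorem to $G'$ and translate its Tutte--Gallai condition into the stated inequality. That theorem yields an $f$-factor of $G'$ if and only if, for all disjoint $S,T\subseteq V$,
\[
\sum_{s\in S}f(s)-\sum_{t\in T}f(t)+\sum_{t\in T}d_{G'-S}(t)-q'(S,T)\geq 0,
\]
where $q'(S,T)$ counts the components $C$ of $G'\setminus(S\cup T)$ with $f(V(C))+|[T,V(C)]_{G'}|$ odd. Since loops neither join distinct vertices nor change the component structure, one has $|[T,V(C)]_{G'}|=|[T,V(C)]_G|$ and the components of $G'\setminus(S\cup T)$ coincide with those of $G\setminus(S\cup T)$, whence $q'(S,T)=q(S,T)$. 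Moreover each $t$ gains $f(t)-g(t)$ to its degree from its own loops, none of which reach $S$, so $d_{G'-S}(t)=d_G(t)-|[\{t\},S]_G|+(f(t)-g(t))$ and therefore $\sum_{t\in T}d_{G'-S}(t)=d_G(T)+f(T)-g(T)-|[S,T]_G|$. Substituting, the $f(T)$ terms cancel and the displayed condition becomes exactly $d_G(T)-g(T)+f(S)-|[S,T]_G|-q(S,T)\geq 0$, as required.

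The main obstacle I anticipate is a clean justification of the $f$-factor engine in the presence of loops, since that theorem is usually stated for loopless (multi)graphs. I would either invoke a loop-allowing version, as the paper already does for Theorem~\ref{thm:Kano}, or, to stay loopless, simulate each optional ``$+2$'' at $v$ by a small triangle gadget: two new vertices $w_1,w_2$ with edges $vw_1,vw_2,w_1w_2$ and $f(w_1)=f(w_2)=1$, so that an $f$-factor selects either $w_1w_2$, contributing $0$ to $v$, or both $vw_1,vw_2$, contributing $2$. The gadget makes the reduction unconditional but complicates the bookkeeping of the second paragraph, as one must check that the gadget vertices contribute trivially to each term of the Tutte--Gallai condition and that the extra components and degrees cancel, recovering the same final inequality. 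The necessity direction also falls out of this reduction, though it can alternatively be verified directly by a self-contained handshake-and-parity count on a given $(g,f)$-parity factor $H$, which is precisely the above translation run in reverse.
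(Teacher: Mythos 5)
The paper does not prove this statement at all: Theorem~\ref{thm:parityfactor} is imported as a known tool, attributed to Lov\'{a}sz~\cite{Lovasz1970} with a pointer to Theorem~6.1 of~\cite{Akiyama2011}, so there is no internal proof to compare your attempt against. Judged on its own, your reduction is the standard one and its arithmetic is correct: the equivalence between $(g,f)$-parity factors of $G$ and $f$-factors of the looped graph $G'$ is exactly the trick the paper itself records in the introduction (following~\cite{Chen1996}) to derive its non-eigenvalue conditions from Kano's theorem, and your translation of the Tutte--Gallai deficiency condition for $G'$ into the stated inequality checks out, including the identities $q'(S,T)=q(S,T)$ and $\sum_{t\in T}d_{G'-S}(t)=d_G(T)+f(T)-g(T)-|[S,T]_G|$.

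The one substantive gap is the point you flag yourself: everything rests on an $f$-factor theorem valid for graphs with loops, and you never actually discharge that hypothesis. Citing a loops-allowed version of Tutte's theorem~\cite{Tutte1952} would make the argument complete-by-citation, and that is a defensible move (the paper makes the analogous move when it asserts Theorem~\ref{thm:Kano} ``also holds for graphs allowing multiple edges and loops,'' likewise without proof). But your fallback gadget route is not yet a proof: the triangle gadget itself is sound (degree-$1$ constraints on $w_1,w_2$ force exactly the two configurations contributing $0$ or $2$ at $v$), yet the sufficiency direction then requires verifying the Tutte condition on the gadget graph for \emph{all} disjoint $S'',T''$, including sets containing gadget vertices, and showing these cases reduce to the stated inequality on $G$. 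That verification is where the real content of the theorem sits once you choose this route, and it is precisely the part you defer. So: correct strategy, correct computations, but with either an external theorem assumed in a stronger form than its usual statement, or a genuine block of work left undone.
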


\begin{thm}[Interlacing theorem; see~{\cite[Theorem~2.5.1.(i)]{Brouwer2012}}]
	\label{thm:interlacing}
	Let $A$ be an $n\times n$ real symmetric matrix, and $S$ be an $m\times n$ real matrix such that $S^T S = I_m$ with $m<n$.
	Let $\lambda_1 \geq \dots \geq \lambda_n$ be the eigenvalues of $A$, and $\mu_1 \geq \dots \geq \mu_m$ be the eigenvalues of $B = S^T A S$.
	Then
	\begin{equation*}
		\lambda_i \geq \mu_i \geq \lambda_{n-m+i}
	\end{equation*}
	for each $i\in[m]$.
\end{thm}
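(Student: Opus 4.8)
The plan is to derive both inequalities directly from the Courant--Fischer min--max characterization of eigenvalues, exploiting that the hypothesis $S^T S = I_m$ makes $S$ have orthonormal columns, so that $y \mapsto Sy$ embeds $\mathbb{R}^m$ isometrically into $\mathbb{R}^n$ and preserves Rayleigh quotients. Concretely, first I would record the two observations that drive everything. Writing $R_M(z) = z^T M z / z^T z$ for the Rayleigh quotient, for every $y \in \mathbb{R}^m$ one has $(Sy)^T (Sy) = y^T S^T S y = y^T y$ and $(Sy)^T A (Sy) = y^T (S^T A S) y = y^T B y$, so $R_A(Sy) = R_B(y)$. Moreover, since $S$ is injective, the image $SV := \{Sy : y \in V\}$ of any $k$-dimensional subspace $V \subseteq \mathbb{R}^m$ is a $k$-dimensional subspace of $\mathbb{R}^n$.

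For the upper bound $\mu_i \le \lambda_i$, I would use the max--min form $\lambda_i(A) = \max_{\dim U = i} \min_{0 \ne x \in U} R_A(x)$ together with $\mu_i(B) = \max_{\dim V = i} \min_{0 \ne y \in V} R_B(y)$. Choosing an optimal $i$-dimensional $V^\ast$ realizing $\mu_i$, the pushed-forward subspace $SV^\ast$ is $i$-dimensional, and the Rayleigh-quotient identity gives $\min_{0 \ne x \in SV^\ast} R_A(x) = \min_{0 \ne y \in V^\ast} R_B(y) = \mu_i$; feeding $SV^\ast$ into the max over all $i$-dimensional subspaces of $\mathbb{R}^n$ yields $\lambda_i \ge \mu_i$.

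For the lower bound $\mu_i \ge \lambda_{n-m+i}$, I would switch to the dual min--max form $\lambda_j(A) = \min_{\dim U = n-j+1} \max_{0 \ne x \in U} R_A(x)$. Taking $j = n-m+i$ gives $n-j+1 = m-i+1$, which matches the subspace dimension in $\mu_i(B) = \min_{\dim V = m-i+1} \max_{0 \ne y \in V} R_B(y)$. Picking an optimal $(m-i+1)$-dimensional $V^\ast$ realizing $\mu_i$ and pushing it forward to the $(m-i+1)$-dimensional subspace $SV^\ast$, the same identity gives $\max_{0 \ne x \in SV^\ast} R_A(x) = \mu_i$, so the minimum over all $(m-i+1)$-dimensional subspaces of $\mathbb{R}^n$ is at most $\mu_i$, i.e. $\lambda_{n-m+i} \le \mu_i$. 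Combining the two bounds gives the claim for every $i \in [m]$.

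The steps are all routine once the Rayleigh-quotient identity $R_A(Sy) = R_B(y)$ is in hand; the only point requiring care---and the part I would double-check---is the dimension bookkeeping in the dual characterization, namely that the index shift $j = n-m+i$ produces exactly the codimension $m-i+1$ that lines up with the min--max description of $\mu_i$, and that injectivity of $S$ is genuinely used so that the pushed-forward subspace does not drop dimension. No deeper obstacle is expected, since this is the classical Poincar\'e separation (Cauchy interlacing) theorem.
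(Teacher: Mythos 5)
Your proof is correct. Note first that the paper itself offers no proof of this statement---it is quoted verbatim (up to a dimension typo, see below) from Brouwer and Haemers \cite[Theorem~2.5.1(i)]{Brouwer2012}---so the natural comparison is with the proof in that source. There the argument runs through eigenvectors rather than the min--max theorem: one fixes orthonormal eigenvectors $u_1,\dots,u_n$ of $A$ and $v_1,\dots,v_m$ of $B$, picks by a dimension count a nonzero vector $s_i \in \langle v_1,\dots,v_i\rangle \cap \langle S^T u_1,\dots,S^T u_{i-1}\rangle^{\perp}$, and sandwiches the Rayleigh quotient $R_A(S s_i) = R_B(s_i)$ between $\lambda_i$ and $\mu_i$ to get $\lambda_i \geq \mu_i$; the lower bound $\mu_i \geq \lambda_{n-m+i}$ is then obtained for free by applying the same inequality to $-A$ and $-B$. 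Your route---both dual forms of Courant--Fischer plus the observation that $S$ pushes $k$-dimensional subspaces forward to $k$-dimensional subspaces with the same Rayleigh quotients---is an equivalent and equally standard packaging of the same idea; what the cited proof buys is that only the elementary Rayleigh-quotient bound (not the full min--max characterization) is needed, together with the $-A$ symmetry trick replacing your dual-form dimension bookkeeping, which you correctly verified ($j = n-m+i$ gives codimension index $n-j+1 = m-i+1$). One point worth flagging: the statement as printed is dimensionally inconsistent---if $S$ is $m\times n$ then $S^T S$ is $n\times n$, not $I_m$, and $S^T A S$ does not even conform; the intended convention, as in \cite{Brouwer2012}, is that $S$ is $n\times m$ with orthonormal columns. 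Your proof silently adopts exactly this correct reading (an isometric embedding $y \mapsto Sy$ of $\mathbb{R}^m$ into $\mathbb{R}^n$, with injectivity guaranteed by $S^T S = I_m$), so no harm is done, but in a referee report you would want to note the typo.
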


The following two propositions are useful to compute and bound the spectral radius of a graph.
We will use two propositions without mentioning them.

\begin{prop}[see {\cite[Chapters~8 and~9]{Godsil2001}}]\label{prop:equitable}
	For a graph $G$ and an equitable partition $\pi$ of~$G$, $\lambda_1(G) = \lambda_1(G/\pi)$.
\end{prop}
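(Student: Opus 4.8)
The plan is to use the \emph{characteristic matrix} $S$ of the partition $\pi$ to realize the quotient $B := G/\pi$ as the action of $A := A(G)$ on the subspace $U \subseteq \mathbb{R}^{V(G)}$ consisting of vectors that are constant on each part $W_i$. Concretely, I would let $S$ be the $V(G) \times \pi$ matrix whose column indexed by $W_i$ is the indicator vector of $W_i$, so that $\operatorname{col}(S) = U$, the columns of $S$ are linearly independent, and $S^{T}S = D$ with $D = \operatorname{diag}(|W_1|,\dots,|W_m|)$. The single identity driving everything is
\begin{equation*}
	AS = SB ,
\end{equation*}
which holds precisely because $\pi$ is equitable: for $v \in W_i$ the $(v,W_j)$-entry of $AS$ counts the neighbors of $v$ in $W_j$, and equitability makes this count equal the constant $(W_i,W_j)$-entry of $B$. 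I would also record that $B$ is similar to the symmetric matrix $D^{1/2}BD^{-1/2}$, whose $(i,j)$-entry is $|[W_i,W_j]_G|/\sqrt{|W_i||W_j|}$, so that $B$ has real eigenvalues and $\lambda_1(B)$ is well defined.

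From $AS = SB$ the inequality $\lambda_1(B) \le \lambda_1(G)$ is immediate: if $Bx = \lambda_1(B)x$ with $x \ne 0$, then $A(Sx) = S(Bx) = \lambda_1(B)(Sx)$, and $Sx \ne 0$ because $S$ has full column rank, so $\lambda_1(B)$ is an eigenvalue of $A$ and hence at most $\lambda_1(G)$.

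The reverse inequality $\lambda_1(G) \le \lambda_1(B)$ is the part that needs an idea, and I would obtain it by showing that the top eigenvalue of $A$ is already visible on $U$. Starting from a unit eigenvector $\mathbf{v}$ of $A$ for $\lambda_1(G)$, the Rayleigh-quotient computation $\lambda_1(G) = \mathbf{v}^{T}A\mathbf{v} \le |\mathbf{v}|^{T}A|\mathbf{v}| \le \lambda_1(G)$ (valid since $A$ is entrywise nonnegative and symmetric) shows the entrywise absolute value $|\mathbf{v}|$ is again a top eigenvector, so I may assume $\mathbf{v} \ge 0$. Because $U$ is $A$-invariant (this is exactly what $AS = SB$ says) and $A$ is symmetric, the orthogonal projection $P_U$ commutes with $A$, whence $A(P_U\mathbf{v}) = \lambda_1(G)(P_U\mathbf{v})$. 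The vector $P_U\mathbf{v}$ records the part-averages of $\mathbf{v}$, so it is nonnegative, and since $\mathbf{v} \ge 0$ is nonzero it is itself nonzero. Writing $P_U\mathbf{v} = Sy$ with $y \ne 0$ and cancelling $S$ (which is injective) in $S(By) = ASy = \lambda_1(G)Sy = S(\lambda_1(G)y)$ yields $By = \lambda_1(G)y$, so $\lambda_1(G)$ is an eigenvalue of $B$ and therefore $\lambda_1(G) \le \lambda_1(B)$.

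Combining the two inequalities gives $\lambda_1(G) = \lambda_1(B) = \lambda_1(G/\pi)$. I expect the main obstacle to be the reverse direction: the naive hope that the Perron eigenvector of $A$ is automatically constant on the parts fails when $G$ is disconnected, where $\lambda_1(G)$ need not be simple and the Perron vector need not be strictly positive, so the argument must avoid irreducibility. The device of passing to $|\mathbf{v}|$ and then projecting onto $U$, rather than invoking simplicity of $\lambda_1(G)$, is what makes the step go through uniformly; note that the Interlacing Theorem alone only reproves the easy inequality $\lambda_1(B)\le\lambda_1(G)$ and does not by itself force equality.
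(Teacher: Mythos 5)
Your proof is correct, but note that the paper does not prove this proposition at all: it is quoted from Godsil and Royle~\cite[Chapters~8 and~9]{Godsil2001}, so the comparison is with the standard textbook argument rather than with anything in this paper. Your easy direction is the classical one, and in fact it also follows from the paper's own Theorem~\ref{thm:interlacing} applied to $\tilde S = SD^{-1/2}$, which satisfies $\tilde S^T\tilde S = I$ and $\tilde S^T A \tilde S = D^{-1/2}S^TAS D^{-1/2}$, the symmetric matrix similar to $B$. Where you genuinely diverge is the reverse inequality $\lambda_1(G)\le\lambda_1(B)$: the textbook route (this is why Chapter~8 is cited) takes a Perron--Frobenius eigenvector $x\ge 0$ of the nonnegative matrix $B$, lifts it to the nonnegative eigenvector $Sx$ of $A$, and invokes the fact that for an \emph{irreducible} nonnegative matrix the spectral radius is the only eigenvalue admitting a nonnegative eigenvector --- which requires $G$ connected, with the disconnected case handled by passing to a component attaining $\lambda_1(G)$. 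Your argument instead pushes a nonnegative top eigenvector of $A$ \emph{down}: the absolute-value trick $\lambda_1(G)=\mathbf{v}^TA\mathbf{v}\le|\mathbf{v}|^TA|\mathbf{v}|\le\lambda_1(G)$ produces a nonnegative top eigenvector, and projecting onto the $A$-invariant subspace $U=\operatorname{col}(S)$ (using that $P_U$ commutes with the symmetric $A$, and that part-averages of a nonzero nonnegative vector cannot all vanish) exhibits $\lambda_1(G)$ directly as an eigenvalue of $B$. This buys uniformity: no irreducibility, no component reduction, only symmetry and entrywise nonnegativity of $A$. All steps check out --- $AS=SB$ is exactly equitability, $S$ has full column rank, equality in the Rayleigh quotient forces $|\mathbf{v}|$ to be an eigenvector, and injectivity of $S$ legitimizes the cancellation $By=\lambda_1(G)y$. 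One cosmetic caveat: for $i=j$ the paper's quotient entry $|[W_i,W_i]_G|/|W_i|$ must be read as $2|E(G[W_i])|/|W_i|$ (edge-endpoints, not edges) for $AS=SB$ to hold; your verification via neighbor counts implicitly uses this correct convention.
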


\begin{prop}[\cite{Collatz1957}]
	The spectral radius of a graph is at least the average degree of the graph.
\end{prop}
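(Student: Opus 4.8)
The plan is to use the Rayleigh quotient (variational) characterization of the largest eigenvalue of a real symmetric matrix. Write $A = A(G)$, $n = |V(G)|$, and let $\mathbf{1}$ denote the all-ones vector in $\mathbb{R}^n$. Since $A$ is real symmetric, its largest eigenvalue satisfies $\lambda_1(A) = \max_{x \neq 0} (x^T A x)/(x^T x)$, and because $A$ is entrywise nonnegative this largest eigenvalue coincides with the spectral radius $\lambda_1(G)$ recorded in the preliminaries. The whole proof amounts to evaluating this maximum at one cleverly chosen, but in fact completely natural, test vector.

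First I would plug the test vector $x = \mathbf{1}$ into the Rayleigh quotient. The numerator becomes $\mathbf{1}^T A \mathbf{1} = \sum_{v,w \in V(G)} A_{vw} = \sum_{v \in V(G)} d_G(v) = d_G(V(G))$, since each row sum of the adjacency matrix is exactly the degree of the corresponding vertex. The denominator is $\mathbf{1}^T \mathbf{1} = n$. Because the maximum in the variational principle dominates the value at any particular vector, I would then conclude $\lambda_1(G) = \lambda_1(A) \geq (\mathbf{1}^T A \mathbf{1})/(\mathbf{1}^T \mathbf{1}) = d_G(V(G))/n$, and the right-hand side $\frac{1}{n}\sum_{v \in V(G)} d_G(v)$ is precisely the average degree of $G$.

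There is essentially no serious obstacle here; the statement is immediate once the variational principle is invoked. The only point that deserves a sentence of care is the identification of $\lambda_1(A)$ with the spectral radius $\lambda_1(G)$, which holds because the adjacency matrix is symmetric and nonnegative, so Perron--Frobenius places the eigenvalue of largest modulus at $\lambda_1$ (this is also exactly the remark $\lambda_1(G)=\max_{i}|\lambda_i(G)|$ from the preliminaries). As an aside I would note, without needing it for the proposition, that equality holds precisely when $\mathbf{1}$ is an eigenvector for $\lambda_1$, that is, when $G$ is regular.
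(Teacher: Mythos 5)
Your proof is correct: evaluating the Rayleigh quotient at the all-ones vector is the standard argument for this classical fact, which the paper itself states without proof, citing Collatz and Sinogowitz~\cite{Collatz1957}. The only superfluous step is the appeal to Perron--Frobenius: the paper's $\lambda_1(G)$ is by definition the largest eigenvalue of the symmetric matrix $A(G)$, so the variational principle gives $\lambda_1(G) \geq \mathbf{1}^T A(G) \mathbf{1} / n$ directly, with no need to identify $\lambda_1$ with the eigenvalue of largest modulus (and your closing remark on the equality case, that $\mathbf{1}$ maximizes the quotient exactly when it is a $\lambda_1$-eigenvector, i.e.\ when $G$ is regular, is also correct).
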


We now provide a lower bound for the spectral radius of a graph in terms of certain number of vertices and edges, which will be used in the proof of Theorem~\ref{thm:main} to obtain the eigenvalue conditions.

\begin{lem}\label{lem:extreme}
	Let $r$ and $\eta$ be positive integers with $r > \eta$.
	Let $H$ be a graph with at least $r+1$ vertices and at least $(r|V(H)| - \eta)/2$ edges.
	Then the following hold.
	\begin{enumerate}[label=\rm(\roman*)]
	    \item\label{item:ext1} If at least one of $r$ and $\eta$ is even, then $\lambda_1(H) \geq \frac{r-2 + \sqrt{(r+2)^2 - 8 \lfloor \eta/2 \rfloor}}{2}$.
	    \item\label{item:ext2} If both $r$ and $\eta$ are odd, then $\lambda_1(H) \geq \frac{r-3 + \sqrt{(r+3)^2 - 4\eta}}{2}$.
	    \item\label{item:ext3} If $r$ is odd and $\eta = 1$ and every vertex of $H$ except one has degree at least $r$, then $\lambda_1(H) \geq \mu$, where $\mu$ is the largest real root of $x^3 - (r-2)x^2 - 2rx + r - 1 = 0$.
	\end{enumerate}
\end{lem}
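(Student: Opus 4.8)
The plan is to produce, for every graph $H$ satisfying the hypotheses, a small real symmetric matrix $B$ with $\lambda_1(B)\ge\rho(r,\eta)$ and to conclude $\lambda_1(H)\ge\lambda_1(B)$ from the Interlacing Theorem (Theorem~\ref{thm:interlacing}), applied with $S$ the matrix of normalized indicator vectors of a suitable vertex partition. Write $n=|V(H)|$ and $m=|E(H)|$, so $2m\ge rn-\eta$. Since adding an edge strictly increases the spectral radius (Perron--Frobenius monotonicity under an entrywise increase of a nonnegative matrix), it is enough to treat the edge-minimal case, in which $2m$ equals the least even integer that is at least $rn-\eta$. This is where the parity hypotheses enter: the parities of $r$, $n$ and $\eta$ fix the \emph{effective deficiency} $D:=rn-2m$, and tracking $D$ across the residues of $n$ is exactly what selects among the three branches of $\rho$.

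First I would dispatch the large-$n$ regime using the proposition that the spectral radius is at least the average degree, which gives $\lambda_1(H)\ge 2m/n=r-D/n$. Since $D\le\eta<r$ and $n\ge r+1$, an elementary estimate shows that once $n$ passes an explicit threshold (for instance $n\ge r+2$ in case~\ref{item:ext1} and $n\ge r+3$ in the odd--odd case~\ref{item:ext2}), one already has $r-D/n\ge\rho(r,\eta)$; after rationalizing $r-\rho(r,\eta)$ this becomes a polynomial inequality in $r,\eta$ that follows from $\eta<r$. Hence only a couple of small values of $n$ survive, and for those $m$ is so close to $\binom n2$ that $H=K_n\setminus F$ for a \emph{sparse} graph $F$ of missing edges, which is the situation I can analyze by hand.

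For the surviving near-complete graphs I would partition $V(H)$ according to degree — a ``deficient'' class of low-degree vertices and its complement (three classes in case~\ref{item:ext3}) — and interlace. Because $H$ is a clique minus $F$, the non-deficient class induces a complete graph and the edges between classes are nearly complete, so the entries of $B$ become explicit functions of the class sizes and of $|E(F)|$, and $\lambda_1(B)$ is the largest root of a low-degree polynomial. The target values are exactly the spectral radii of the extremal graphs: in case~\ref{item:ext1}, $K_{r+1}$ minus a matching of $\lfloor\eta/2\rfloor$ edges, whose equitable quotient $\left(\begin{smallmatrix}2\lfloor\eta/2\rfloor-2 & r+1-2\lfloor\eta/2\rfloor\\ 2\lfloor\eta/2\rfloor & r-2\lfloor\eta/2\rfloor\end{smallmatrix}\right)$ has characteristic polynomial $x^2-(r-2)x-2(r-\lfloor\eta/2\rfloor)$, i.e.\ top root $\tfrac12\bigl(r-2+\sqrt{(r+2)^2-8\lfloor\eta/2\rfloor}\bigr)$; in case~\ref{item:ext2}, a copy of $K_\eta$ minus a $2$-factor completely joined to $K_{r+2-\eta}$ minus a perfect matching, giving $x^2-(r-3)x-(3r-\eta)$; and in case~\ref{item:ext3}, the graph on $r+2$ vertices consisting of one vertex $v_0$ of degree $r-1$, its $r-1$ neighbours inducing an $(r-3)$-regular graph, and two further vertices adjacent to each other and to all of $N(v_0)$, whose $3\times3$ quotient yields $x^3-(r-2)x^2-2rx+(r-1)$. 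It then remains to show that, among all admissible $F$, making the deletions as ``spread out'' as in the extremal graph only lowers $\lambda_1(B)$, so that $\lambda_1(B)\ge\rho(r,\eta)$ throughout.

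The main obstacle is this last small-$n$ step, not the large-$n$ bound. Three points need care: (a) the parity bookkeeping, to be sure the binding branch of $\rho$ is correctly matched to each residue class of $n$ and that no small $n$ is overlooked; (b) the monotonicity of the dominant root of the $2\times2$ matrix $B$ in the structural parameter (the number of deficient vertices), a genuine if elementary single-variable estimate on the larger root of a quadratic; and (c) the $\eta=1$ case~\ref{item:ext3}, where the extremal graph is no longer a clique-minus-matching, the hypothesis that all but one vertex has degree at least $r$ is essential to pin down the three classes, and one must control the largest root of a cubic rather than a quadratic. The same two-step scheme carries through in each branch, but the cubic case requires the most delicate identification of the right partition.
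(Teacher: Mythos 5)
Your proposal follows essentially the same route as the paper's proof: dispatch $|V(H)|$ beyond an explicit, parity-corrected threshold via the average-degree lower bound on $\lambda_1(H)$, then handle the surviving near-complete cases ($|V(H)|=r+1$ in (i), $|V(H)|=r+2$ in (ii) and (iii)) by interlacing with quotient matrices of degree-based partitions, where your extremal quotients and characteristic polynomials --- $x^2-(r-2)x-2(r-\lfloor\eta/2\rfloor)$, $x^2-(r-3)x-(3r-\eta)$, and the cubic $x^3-(r-2)x^2-2rx+r-1$ --- match the paper's exactly. The only deviations are bookkeeping: where you propose an edge-minimality reduction plus a monotonicity argument in how the deleted edges are distributed, the paper fixes the class sizes (so that in case (i) the quotient is fully determined, with no extremality argument needed), lets a single free parameter $d$ (the number of missing edges between the degree-$r$ class and the rest) enter the quotient in case (ii), and simply verifies the sign of the characteristic polynomial at the target value $\nu$ for all admissible $d$ --- the same computation in different clothing.
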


\begin{proof}
	\ref{item:ext1}
	Suppose that $|V(H)| \geq r+2$.
	If $r$ is even and $\eta$ is odd, then $2|E(H)| \geq r(r+2) - \eta + 1 = r(r+2) - 2 \lfloor \eta/2 \rfloor$ by parity.
	If $\eta$ is even, then $\eta = 2 \lfloor \eta/2 \rfloor$.
	Hence $2|E(H)| \geq r(r+2) - 2 \lfloor \eta/2 \rfloor$, so
	\begin{equation*}
	  \lambda_1(H) \geq \frac{2|E(H)|}{|V(H)|} \geq \frac{r(r+2) - 2\lfloor \eta/2 \rfloor}{r+2}
	  > \frac{r-2 + \sqrt{(r+2)^2 - 8 \lfloor \eta/2 \rfloor}}{2}.
	\end{equation*}
	Therefore, we may assume that $|V(H)| = r+1$.
	Now we suppose that $2|E(H)| \geq r(r+1) - 2 \lfloor \eta/2 \rfloor + 1$.
	Then
	\begin{equation*}
		\lambda_1(H) \geq \frac{2|E(H)|}{|V(H)|} \geq \frac{r(r+1) - 2\lfloor \eta/2 \rfloor + 1}{r+1}
		> \frac{r-2 + \sqrt{(r+2)^2 - 8 \lfloor \eta/2 \rfloor}}{2}.
	\end{equation*}
	Therefore, we may assume that $2|E(H)| = r(r+1) - 2 \lfloor \eta/2 \rfloor$.

	The number of vertices of degree $r$ in $H$ is at least $r+1-2\lfloor \eta/2 \rfloor > 0$.
	Let $A$ be a set of $r+1-2\lfloor \eta/2 \rfloor$ vertices of degree $r$ in $H$, and let $B = V(H) \setminus A$.
	If $\eta \leq 1$, then $H$ is the complete graph with $r+1$ vertices, which implies that $\lambda_1(H) = r$.
	Hence we may assume that $\eta \geq 2$, so $B \neq \emptyset$.
	Let $Q$ be the quotient matrix of $H$ with respect to a partition $\{A,B\}$ of $V(H)$.
	One may check that
	\begin{equation*}
	  Q = \begin{pmatrix}
		  r - 2\lfloor \eta/2 \rfloor		&	2\lfloor \eta/2 \rfloor \\
		  r + 1 - 2\lfloor \eta/2 \rfloor	&	2\lfloor \eta/2 \rfloor - 2
	  \end{pmatrix},
	\end{equation*}
	where the first row and column are indexed by $A$, and the second row and column are indexed by $B$.
	Moreover,
	\begin{equation*}
	  \lambda_1(Q) = \frac{r-2 + \sqrt{(r+2)^2 - 8 \lfloor \eta/2 \rfloor}}{2}.
	\end{equation*}
	By the interlacing theorem, $\lambda_1(H) \geq \lambda_1(Q)$, which completes the proof of~\ref{item:ext1}.

	\ref{item:ext2}
	For convenience, let $\nu = \frac{1}{2}(r-3 + \sqrt{(r+3)^2-4\eta})$.
	Suppose that $|V(H)| \geq r+3$.
	Then
	\begin{equation*}
		\lambda_1(H) \geq \frac{2|E(H)|}{|V(H)|} \geq \frac{r(r+3) - \eta}{r+3}
		> \nu.
	\end{equation*}
	Suppose that $|V(H)| = r+1$.
	By parity, $2|E(H)| \geq r(r+1) - \eta + 1$ so
	\begin{equation*}
		\lambda_1(H) \geq \frac{2|E(H)|}{|V(H)|} \geq \frac{r(r+1) - \eta + 1}{r+1} > \nu.
	\end{equation*}
	Hence we may assume that $|V(H)| = r+2$.
	Suppose that $2|E(H)| \geq r(r+2) - \eta + 1$.
	Then
	\begin{equation*}
		\lambda_1(H) \geq \frac{2|E(H)|}{|V(H)|} \geq \frac{r(r+2) - \eta+1}{r+2}
		> \nu.
	\end{equation*}
	Therefore, we may assume that $2|E(H)| = r(r+2) - \eta$.

	Let $\alpha$ be the number of vertices of degree $r+1$ in $H$, and $\beta$ be the number of vertices of degree $r$ in $H$.
	Since $(r+1)\alpha + r\beta + (r-1)(r+2-\alpha-\beta) \geq \sum_{v\in V(H)} d_H(v) = r(r+2) - \eta$, we have $2\alpha+\beta \geq r+2-\eta > 0$.

	Suppose that the number of vertices of degree $r+1$ is at least $(r+2-\eta)/2$.
	Then we can take a partition $\{A,C\}$ of $V(H)$ such that $|A| = (r+2-\eta)/2$ and all vertices in $A$ have degree $r+1$.
	The quotient matrix $Q_1$ of $H$ with respect to the partition $\{A,C\}$ is
	\begin{equation*}
		\begin{pmatrix}
			(r-\eta)/2		&	(r+2+\eta)/2 \\
			(r+2-\eta)/2	&	(r-4+\eta)/2
		\end{pmatrix},
	\end{equation*}
	and
	\begin{equation*}
		\lambda_1(Q_1) = \frac{r-2 + \sqrt{(r+2)^2-4(\eta-1)}}{2} > \nu.
	\end{equation*}
    Therefore, we may assume that the number of vertices of degree $r+1$ is strictly less than $(r+2-\eta)/2$, so $\beta > 0$.

	Suppose that $H$ has a vertex of degree $r+1$.
	Then we can take a partition $\{A,B,C\}$ of $V(H)$ such that $2|A|+|B| = r+2-\eta$, and all vertices in $A$ has degree $r+1$, and all vertices in $B$ has degree $r$.
	Note that $|C| = r+2-|A|-|B| = \eta + |A|$.
	Let $a = |A|$, $b = |B|$, and $c = |C|$.

    For each vertex in $B$, the number of its neighbors in $C$ is either $c$ or $c-1$.
	Hence $b(c-1) \leq |[B,C]_H| \leq bc$.
	Let $d$ be the integer such that $|[B,C]_H| = bc - d$.
	Then $0 \leq d \leq b$.
	Let $Q_2$ be the quotient matrix of $H$ with respect to the partition $\{A,B,C\}$.
	Then
	\begin{equation*}
	  Q_2 = \begin{pmatrix}
		  a-1		&	b				&	c \\
		  a			&	b-2+\frac{d}{b}	&	c-\frac{d}{b} \\
		  a			&	b-\frac{d}{c}	&	c-3+\frac{d}{c}
	  \end{pmatrix},
	\end{equation*}
	and the characteristic polynomial $q_2$ of $Q_2$ is
	\begin{align*}
	  &x^3
	  - \left( a+b+c-6+ \frac{d}{c} + \frac{d}{b} \right) x^2 \\
	  &+ \left( -5a-4b-3c+11 + \frac{d}{c}(a+b+c-3) + \frac{d}{b}(a+b+c-4) \right) x \\
	  &- \left( 6a+3b+2c-6 -\frac{d}{c}(2a+b-2) - \frac{d}{b}(3a+2b+c-3) \right) \\
	  =
	  &x^3
	  - \left( r-4 + \frac{d}{bc}(r+2-a) \right) x^2 \\
	  &+ \left( -4r+3+\eta + \frac{d}{bc}(r^2+(-a+1)r-\eta-2) \right) x \\
	  &- \left( 3r-\eta+2a - \frac{d}{bc} (r^2+2r-\eta+a\eta+a) \right).
	\end{align*}
	Let $p = x^2-(r-3)x-3r+\eta$.
	Then $p(\nu) = 0$.

	We claim that $\lambda_1(Q_2) > \nu$.
	It suffices to show that $q_2(\nu)<0$.
	One can check that
	\begin{align*}
		&q_2 - \left( x+1 - \frac{d(r+2-a)}{bc} \right) p \\
		&=
		-2a + \frac{d}{bc} \left( (2r+4-\eta-3a)x - 2r^2 - 4r + \eta r + 3ar + \eta + a  \right) \\
		&=
		-2a +
		\frac{d(2r+4-\eta-3a)}{bc}
		\left(
			x - r + \frac{\eta+a}{2r+4-\eta-3a}
		\right).
	\end{align*}
	Recall that $c=\eta+a$ and $d\leq b$.
	Since $\nu < r$,
	\begin{align*}
		q_2(\nu)
		&= 
		-2a +
		\frac{d(2r+4-\eta-3a)}{bc}
		\left(
			\nu - r + \frac{\eta+a}{2r+4-\eta-3a}
		\right) \\
		&<
		-2a + \frac{d}{b}
		\leq -2a + 1 < 0
	\end{align*}
	Therefore, we may assume that $H$ has no vertex of degree $r+1$.
	
	Now, we can take a partition $\{B,C\}$ of $V(H)$ such that $|B| = r+2-\eta$ and every vertex in $B$ has degree $r$.
	As like before, $|[B,C]_H| = |B||C|-d$ for some integer $d$ with $0\leq d\leq |B|$.
	Let $Q_3$ be the quotient matrix of $H$ with respect to the partition $\{B,C\}$.
	Then
	\begin{equation*}
		Q_3 =
		\begin{pmatrix}
			r-\eta+{d}/(r+2-\eta)	&	\eta-{d}/(r+2-\eta) \\
			r+2-\eta-{d}/{\eta}		&	\eta-3+{d}/{\eta}
		\end{pmatrix},
	\end{equation*}
	and the characteristic polynomial $q_3$ of $Q_3$ is
	\begin{align*}
		x^2 - \left( r-3 + \frac{d}{(r+2-\eta)\eta} (r+2) \right) x -3r+\eta + \frac{d}{(r+2-\eta)\eta} (r^2+2r-\eta).
	\end{align*}
	Then
	\begin{align*}
		q_3(\nu)
		&= (q_3-p)(\nu)
		= \frac{d(r+2)}{(r+2-\eta)\eta} \left( -\nu + r - \frac{\eta}{r+2} \right).
	\end{align*}
	Note that $\nu > r-\frac{\eta}{r+2}$.
	Hence $q_3(\nu) \leq 0$, which implies that $\lambda_1(H) \geq \nu$.
	
	\ref{item:ext3}
	Let $p = x^3-(r-2)x^2-2rx+r-1$.
	Recall that $\mu$ is the largest real solution of~$p=0$.
	Since $r$ is odd with $r > \eta = 1$, we have $r \geq 3$.
	Suppose that $|V(H)| \geq r+4$.
	Then $\lambda_1(H) \geq \frac{2|E(H)|}{|V(H)|} \geq r - \frac{1}{r+4}$.
	One may check that
	\begin{align*}
	    p\left(r-\frac{1}{r+4}\right) &= \frac{r^3+6r^2-6r-57}{(r+4)^3} > 0,
	    \tag{1}\label{al:3-1}
	\end{align*}
	and the largest real solution $s$ of $(d/dx)p = 3x^2 - 2(r-2)x -2r$ satisfies that
	\begin{align*}
		s = \frac{2(r-2) + \sqrt{4(r-2)^2 + 24r}}{6}
		&< \frac{2(r-2) + 2(r+2)}{6} = \frac{2r}{3}
		< r- \frac{1}{r+4}.
		\tag{2}\label{al:3-2}
	\end{align*}
	The latter inequality~\eqref{al:3-2} implies that $(d/dx)p (t) > 0$ for all $t \geq r-\frac{1}{r+4}$.
	Then by~\eqref{al:3-1}, $p(t) > 0$ for all $t \geq r - \frac{1}{r+4}$, which implies that $\lambda_1(H) \geq r - \frac{1}{r+4} > \mu$.
	Thus, we may assume that $|V(H)| \leq r+3$.
	
	Suppose that $|V(H)| = r+3$ (or $r+1$).
	By parity, $2|E(H)| \geq r(r+3)$ (respectively, $r(r+1)$) and thus $\lambda_1(H) \geq 2|E(H)|/|V(H)| = r > r-\frac{1}{r+4} > \mu$.
	Therefore, we may assume that $|V(H)|=r+2$.
	If $2|E(H)| \geq r(r+2)$, then $\lambda_1(H) \geq r > \mu$.
	Therefore, we may assume that $2|E(H)| = r(r+2)-1$.
	
	Since every vertex of $H$ except one has degree at least $r$, $H$ is isomorphic to $K_1 \vee \overline{\frac{r-1}{2}K_2} \vee K_2$.
	Let $\{A,B,C\}$ be an equitable partition of $H$ such that $A$, $B$, and $C$ correspond to the vertex sets of $K_1$, $\overline{\frac{r-1}{2}K_2}$, and $K_2$ in $K_1 \vee \overline{\frac{r-1}{2}K_2} \vee K_2$, respectively.
	Then the quotient matrix of $H$ with respect to the partition $\{A,B,C\}$ is
	\begin{equation*}
		\begin{pmatrix}
			0 & r-1 & 0 \\
			1 & r-3 & 2 \\
			0 & r-1 & 1
		\end{pmatrix},
	\end{equation*}
	whose characteristic polynomial is equal to $p$.
	Thus, $\lambda_1(H) = \mu$.
\end{proof}

There are graphs achieving lower bounds of Lemma~\ref{lem:extreme}.
For positive integers $r$ and $\eta$ with $r > \eta$, we define a graph $H(r,\eta)$ as follows.
\begin{enumerate}[label=\rm(\roman*)]
	\item $H(r,\eta) = \overline{\lfloor \eta/2 \rfloor K_2} \vee K_{r+1- 2\lfloor \eta/2 \rfloor}$ if at least one of $r$ and $\eta$ is even.
	\item $H(r,\eta) = \overline{C_{\eta}} \vee \overline{\frac{r+2-\eta}{2} K_2}$ if both $r$ and $\eta$ are odd with $\eta \geq 3$.
	\item $H(r,\eta) = K_1 \vee \overline{\frac{r-1}{2} K_2} \vee K_2$ if $r$ is odd and $\eta=1$.
\end{enumerate}
When at least one of $r$ and $\eta$ is even, $H(r,\eta)$ has $r+1$ vertices and $\frac{r(r+1)- 2 \lfloor \eta/2 \rfloor}{2}$ edges.
When both $r$ and $\eta$ are odd with $\eta\geq 3$, $H(r,\eta)$ has $r+2$ vertices and $\frac{r(r+2)-\eta}{2}$ edges.
For odd $r$ with $r\geq 3$, $H(r,1)$ has $r+2$ vertices and $\frac{r(r+2)-1}{2}$ edges.
More precisely, only one vertex has degree $r-1$ and the other vertices have degree $r$ in $H(r,1)$.

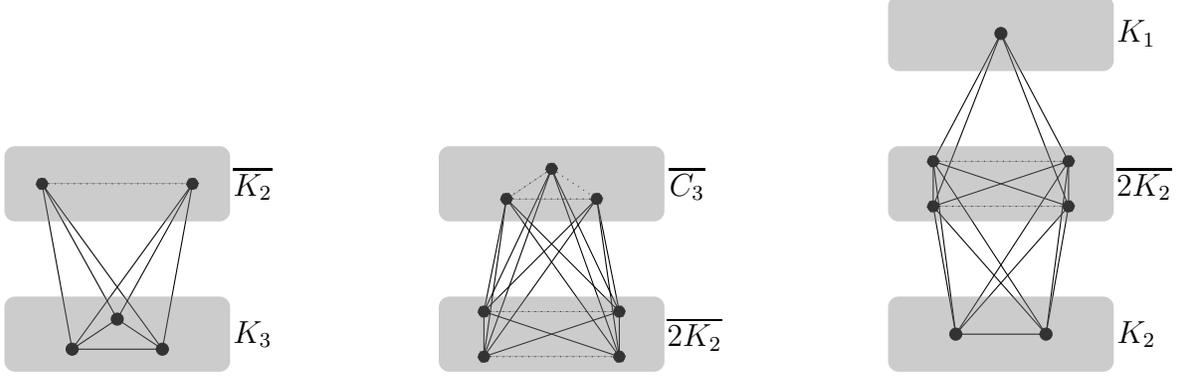
\begin{figure}[h!]
    \centering

    \begin{tikzpicture}
        \coordinate (a1) at (-1,2);
        \coordinate (a2) at (1,2);
        \coordinate (b1) at (0,0.2);
        \coordinate (b2) at (-0.6,-0.2);
        \coordinate (b3) at (0.6,-0.2);

		\node at (1.8,0) {$K_3$};
		\node at (1.8,2) {$\overline{K_2}$};
    
        \draw [dotted, fill] (a1) circle [radius=0.08] -- (a2) circle [radius=0.08];
        \draw [fill] (b1) circle [radius=0.08] -- (b2) circle [radius=0.08] -- (b3) circle [radius=0.08];
        \draw (b1) -- (b3);
        
        \foreach \i in {1,2} {
            \foreach \j in {1,2,3} {
                \draw (a\i) -- (b\j);
            }
        }
        
        \fill [fill=gray, opacity=0.4, rounded corners] (-1.5,-0.5) rectangle (1.5, 0.5);
        \fill [fill=gray, opacity=0.4, rounded corners] (-1.5,1.5) rectangle (1.5,2.5);
    \end{tikzpicture}
    \hspace{1.8cm}
    \begin{tikzpicture}
        \coordinate (a1) at (0,2.2);
        \coordinate (a2) at (-0.6,1.8);
        \coordinate (a3) at (0.6,1.8);
        \coordinate (b1) at (-0.9,0.3);
        \coordinate (b2) at (-0.9,-0.3);
        \coordinate (b3) at (0.9,0.3);
        \coordinate (b4) at (0.9,-0.3);

		\node at (1.9,0) {$\overline{2K_2}$};
		\node at (1.8,2) {$\overline{C_3}$};
    
        \draw [dotted, fill] (a1) circle [radius=0.08] -- (a2) circle [radius=0.08] -- (a3) circle [radius=0.08];
        \draw [dotted] (a1) -- (a3);
        
        \draw [dotted, fill] (b1) circle [radius=0.08] -- (b3) circle [radius=0.08];
        \draw [dotted, fill] (b2) circle [radius=0.08] -- (b4) circle [radius=0.08];
        \draw (b1) -- (b2);
        \draw (b1) -- (b4);
        \draw (b3) -- (b2);
        \draw (b3) -- (b4);
        
        \foreach \i in {1,2,3} {
            \foreach \j in {1,2,3,4} {
                \draw (a\i) -- (b\j);
            }
        }
        
        \fill [fill=gray, opacity=0.4, rounded corners] (-1.5,-0.5) rectangle (1.5, 0.5);
        \fill [fill=gray, opacity=0.4, rounded corners] (-1.5,1.5) rectangle (1.5,2.5);
    \end{tikzpicture}
    \hspace{1.8cm}
    \begin{tikzpicture}
        \coordinate (c) at (0,4);
        \coordinate (a1) at (-0.9,2.3);
        \coordinate (a2) at (-0.9,1.7);
        \coordinate (a3) at (0.9,2.3);
        \coordinate (a4) at (0.9,1.7);
        \coordinate (b1) at (0.6,0);
        \coordinate (b2) at (-0.6,0);

		\node at (1.8,0) {$K_2$};
		\node at (1.9,2) {$\overline{2K_2}$};
		\node at (1.8,4) {$K_1$};
        
        \draw [fill] (c) circle [radius=0.08];
    
        \draw [dotted, fill] (a1) circle [radius=0.08] -- (a3) circle [radius=0.08];
        \draw [dotted, fill] (a2) circle [radius=0.08] -- (a4) circle [radius=0.08];
        \draw (a1) -- (a2);
        \draw (a1) -- (a4);
        \draw (a3) -- (a2);
        \draw (a3) -- (a4);
        
        \draw [fill] (b1) circle [radius=0.08] -- (b2) circle [radius=0.08];
        
        \foreach \i in {1,2,3,4} {
            \draw (c) -- (a\i);
            \foreach \j in {1,2} {
                \draw (a\i) -- (b\j);
            }
        }
        
        \fill [fill=gray, opacity=0.4, rounded corners] (-1.5,-0.5) rectangle (1.5, 0.5);
        \fill [fill=gray, opacity=0.4, rounded corners] (-1.5,1.5) rectangle (1.5,2.5);
        \fill [fill=gray, opacity=0.4, rounded corners] (-1.5,3.5) rectangle (1.5,4.5);
    \end{tikzpicture}
    
    \caption{$H(4,2)$, $H(5,3)$, and $H(5,1)$ from left to right.}
    \label{fig:extremal}
\end{figure}

\begin{lem}
    For positive integers $r$ and $\eta$ with $r > \eta$, $\lambda_1(H(r,\eta)) = \rho(r,\eta)$. 
\end{lem}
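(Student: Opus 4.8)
The plan is to exhibit, in each of the three cases, the natural equitable partition of $H(r,\eta)$ coming from its join decomposition, compute the corresponding quotient matrix, and observe that its largest eigenvalue is exactly $\rho(r,\eta)$. Since $\lambda_1(H) = \lambda_1(H/\pi)$ for any equitable partition $\pi$ by Proposition~\ref{prop:equitable}, this yields the claimed equality directly; moreover the quotient matrices that arise have the same spectral radius as matrices already analyzed in the proof of Lemma~\ref{lem:extreme}, so almost no new computation is needed.

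First consider the case where at least one of $r$ and $\eta$ is even, and write $m = \lfloor \eta/2\rfloor$. If $m = 0$ (so $\eta = 1$ and $r$ is even) then $H(r,\eta) = K_{r+1}$ and $\lambda_1 = r = \rho(r,\eta)$, so assume $m \geq 1$. Take the partition $\{A,B\}$ with $A = V(K_{r+1-2m})$ and $B = V(\overline{mK_2})$. Each vertex of $A$ has $r-2m$ neighbours in $A$ and all $2m$ vertices of $B$ as neighbours, while each vertex of $\overline{mK_2}$ is nonadjacent only to its partner and hence has $2m-2$ neighbours in $B$ together with all $r+1-2m$ vertices of $A$; both counts are independent of the chosen vertex, so $\{A,B\}$ is equitable. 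The resulting quotient matrix is exactly the matrix $Q$ appearing in the proof of Lemma~\ref{lem:extreme}\ref{item:ext1}, whose largest eigenvalue is $\tfrac{1}{2}\bigl(r-2+\sqrt{(r+2)^2-8\lfloor \eta/2\rfloor}\bigr) = \rho(r,\eta)$.

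For both $r$ and $\eta$ odd with $\eta \geq 3$, write $k = (r+2-\eta)/2$ and take the partition $\{A,B\}$ with $A = V(\overline{C_\eta})$ and $B = V(\overline{kK_2})$. Since $C_\eta$ is $2$-regular, each vertex of $A$ has $\eta-3$ neighbours in $A$ and all $r+2-\eta$ vertices of $B$, and each vertex of $B$ has $2k-2 = r-\eta$ neighbours in $B$ and all $\eta$ vertices of $A$, so the partition is equitable with quotient matrix $\begin{pmatrix} \eta-3 & r+2-\eta \\ \eta & r-\eta\end{pmatrix}$. Its characteristic polynomial is $x^2 - (r-3)x - 3r + \eta$, which is the polynomial $p$ from the proof of Lemma~\ref{lem:extreme}\ref{item:ext2}, whose larger root is $\tfrac{1}{2}\bigl(r-3+\sqrt{(r+3)^2-4\eta}\bigr) = \rho(r,\eta)$. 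Finally, for $r$ odd and $\eta = 1$, the partition $\{A,B,C\}$ of $H(r,1) = K_1 \vee \overline{\tfrac{r-1}{2}K_2}\vee K_2$ into its three join blocks is equitable and yields exactly the $3\times 3$ quotient matrix computed at the end of the proof of Lemma~\ref{lem:extreme}\ref{item:ext3}, with characteristic polynomial $x^3 - (r-2)x^2 - 2rx + r - 1$ and largest root $\mu = \rho(r,1)$.

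The computations are routine, so the only points requiring care are verifying equitability (each vertex of a given block has the same number of neighbours in every block, which follows from the regularity of $\overline{C_\eta}$ and $\overline{mK_2}$ together with the completeness of the joins) and confirming that $\rho(r,\eta)$ is the \emph{largest} eigenvalue of each quotient matrix rather than merely an eigenvalue. For the latter, a quotient matrix of an equitable partition is similar to a symmetric matrix, so all its eigenvalues are real; hence $\rho(r,\eta)$, chosen as the larger quadratic root in the first two cases and as the largest real root of the cubic in the third, is the Perron root and equals $\lambda_1(H/\pi) = \lambda_1(H(r,\eta))$.
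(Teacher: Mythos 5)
Your proposal is correct and follows essentially the same route as the paper: the paper likewise takes the equitable partitions given by the join blocks of $H(r,\eta)$, forms the same quotient matrices (the $2\times 2$ matrix $Q$ in the first case, and in the remaining two cases the matrices already analyzed in the proof of Lemma~\ref{lem:extreme}), and applies Proposition~\ref{prop:equitable} to conclude $\lambda_1(H(r,\eta)) = \lambda_1(Q) = \rho(r,\eta)$. Your added remarks — handling $\lfloor \eta/2\rfloor = 0$ via $H=K_{r+1}$ and justifying that $\rho(r,\eta)$ is the largest eigenvalue of the quotient because the quotient of an equitable partition is similar to a symmetric matrix — only make explicit details the paper leaves to the reader.
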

\begin{proof}
	First, let us consider when at least one of $r$ and $\eta$ is even.
	Then $H(r,\eta) = \overline{\lfloor \eta/2 \rfloor K_2} \vee K_{r+1-2\lfloor \eta/2 \rfloor}$.
	If $\eta \leq 1$, then $\lambda_1(H(r,\eta)) = \lambda_1(K_{r+1}) = r$.
	Thus, we may assume that $\eta \geq 2$.
	Let $A = V(K_{r+1-2\lfloor \eta/2 \rfloor})$ and $B = V(\overline{\lfloor \eta/2 \rfloor K_2})$.
	Then $\{A,B\}$ is an equitable partition of $H(r,\eta)$.
	Let $Q$ be the quotient matrix of $G$ with respect to a partition $\{A,B\}$.
	Then
	\begin{align*}
		Q = \begin{pmatrix}
			r - 2 \lfloor \eta/2 \rfloor & 2 \lfloor \eta/2 \rfloor \\
			r + 1 - 2 \lfloor \eta/2 \rfloor & 2 \lfloor \eta/2 \rfloor - 2
		\end{pmatrix}.
	\end{align*}
	By Proposition~\ref{prop:equitable}, $\lambda_1(H(r,\eta)) = \lambda_1(Q)$.
	It is readily shown that
	\begin{equation*}
		\lambda_1(Q) = \frac{r-2 + \sqrt{r+2- 2 \lfloor \eta/2 \rfloor}}{2} = \rho(r,\eta).
	\end{equation*}

	When both $r$ and $\eta$ are odd with $\eta \geq 3$, let us take an equitable partition $\{V(\overline{C_{\eta}}), V(\overline{\frac{r+2-\eta}{2} K_2})\}$ of $H(r,\eta) = \overline{C_{\eta}} \vee \overline{\frac{r+2-\eta}{2} K_2}$.
	Then by imitating the preceding method, one can check that $\lambda_1(H(r,\eta)) = \rho(r,\eta)$.

	When $r$ is odd and $\eta=1$, let us take a partition $\{V(K_1), V(\overline{\frac{r-1}{2} K_2}), V(K_2)\}$ of $H(r,1) = \overline{K_1} \vee \overline{\frac{r-1}{2} K_2} \vee K_2$.
	Then by imitating the preceding method, one can check that $\lambda_1(H(r,\eta)) = \rho(r,\eta)$, which was already shown in the proof of Lemma~\ref{lem:extreme}-\ref{item:ext3}.
\end{proof}

\section{Proof of Theorem~\ref{thm:main}}\label{sec:main}

By using the Lov\'{a}sz's $(g,f)$-parity factor theorem and the interlacing theorem, we now prove our main result.

\begin{proof}[Proof of Theorem~\ref{thm:main}]
	Suppose that $G$ has no $(g,f)$-parity factor.
	By the Lov\'{a}sz's $(g,f)$-parity factor theorem, there are disjoint subsets $S$ and $T$ of $V(G)$ such that $q - f(S) + g(T) - d_G(T) - |[S,T]| > 0$, where $q$ is the number of components $C$ of $G \setminus (S\cup T)$ such that $f(V(C)) + |[T,V(C)]| \equiv 1 \pmod{2}$.
	Let $C_1, C_2, \dots, C_q$ be the components of $G \setminus (S\cup T)$ such that $f(V(C_i)) + |[T,V(C_i)]| \equiv 1 \pmod{2}$ for $i\in[q]$.

	Note that
	\begin{itemize}
		\item $g(T) \equiv f(T) \pmod{2}$,
		\item $d_G(T) + |[T,S]| \equiv |[T,V(G)\setminus (S\cup T)]| \pmod{2}$, and
		\item $q \equiv \sum_{C} ( f(V(C)) + |[T,V(C)]|) = f(V(G) \setminus (S\cup T)) + |[T, V(G) \setminus (S\cup T)]| \pmod{2}$, where the summation is over all components $C$ of $G \setminus (S\cup T)$.
	\end{itemize}
	Then $q - f(S) + g(T) - d_G(T) - |[S,T]| \equiv f(V(G)) \equiv 0 \pmod{2}$.
	Therefore,
	\begin{align*}
		2 &\leq
		q - f(S) + g(T) - d_G(T) + |[S,T]| \\
		&\leq
		q - \theta d_G(S) - (1-\theta) d_G(T) + |[S,T]| \\
		&=
		q
		- \theta \left( |[S,T]| + \sum_{i\in[q]} |[S,V(C_i)]| \right)
		- (1-\theta) \left( |[T,S]| + \sum_{i\in[q]} |[T,V(C_i)]| \right)
		+ |[S,T]| \\
		&=
		q
		- \theta \sum_{i\in[q]} |[S,V(C_i)]|
		- (1-\theta) \sum_{i\in[q]} |[T,V(C_i)]|. \tag{*}\label{ineq}
	\end{align*}

	\ref{item:m1}
	From the previous inequality~\eqref{ineq}, we deduce that
	\begin{align*}
		2
		&\leq
		q
		- \theta \sum_{i\in[q]} |[S,V(C_i)]|
		- (1-\theta) \sum_{i\in[q]} |[T,V(C_i)]| \\
		&\leq
		q - \theta^* \sum_{i\in[q]} |[S\cup T,V(C_i)]| \\
		&\leq
		q - \theta^* h q =
		q(1 - \theta^* h).
	\end{align*}
	If $h\geq 1/\theta^*$, then $2 \leq q(1-\theta^* h) \leq 0$, which is a contradiction.
	Therefore, we may assume that
	\begin{equation*}
		h < 1/\theta^* \leq \delta(G)
		\text{ and }
		\lambda_{\left\lceil \frac{2}{1-\theta^* h} \right\rceil}
	< \rho(\delta(G), \lceil 1/\theta^* \rceil -1).
	\end{equation*}
	Then $q \geq \frac{2}{1-\theta^* h}$ because $2\leq q(1-\theta^* h)$ and $1-\theta^* h > 0$.
  
	Let $x = \left\lceil \frac{2}{1-\theta^* h} \right\rceil$.
	We claim that at least $x$ of $C_1,\dots,C_q$ satisfy that $|[V(C_i), S\cup T]| < 1/\theta^*$.
	Suppose not.
	Then
	\begin{align*}
		2
		&\leq
		q - \theta^* \sum_{i\in[q]} |[S\cup T, V(C_i)]| \\
		&\leq
		q - (q-x+1) - \theta^* h(x-1)
		=
		(1 - \theta^* h)(x-1)
		<
		2,
	\end{align*}
	which is a contradiction.
	Therefore, the claim holds.
	By relabelling, we may assume that $|[V(C_i),S\cup T]| < 1/\theta^*$ for $i\in[x]$.

	Let us fix $i\in[x]$.
	We have $|[V(C_i),S\cup T]| \leq \lceil 1/\theta^* \rceil -1$ because $|[V(C_i),S\cup T]|$ is an integer.
	Since $|[V(C_i),S\cup T]| < 1/\theta^* \leq \delta(G)$, we have $|V(C_i)| \geq \delta(G)+1$ and
	\begin{equation*}
		2|E(C_i)| \geq \delta(G) |V(C_i)| - |[V(C_i),S\cup T]| \geq \delta(G) |V(C_i)| - (\lceil 1/\theta^* \rceil - 1).
	\end{equation*}
	By Lemma~\ref{lem:extreme}, $\lambda_1(C_i) \geq \rho(\delta(G), \lceil 1/\theta^* \rceil -1)$.
	By the interlacing theorem,
	\[
	    \lambda_x(G) \geq \lambda_x(G \setminus (S\cup T)) \geq \min\{\lambda_1(C_j) : j\in[x]\} \geq \rho(\delta(G), \lceil 1/\theta^* \rceil -1),
    \]
    which is a contradiction.
  
	\ref{item:m2}
	Since $d_G(v)$ is even for all vertex $v$ in $G$, for each $i\in[q]$, we have $|[S\cup T,V(C_i)]| = d_G(V(C_i)) - 2|E(C_i)| \equiv 0 \pmod{2}$.
	Since $f(v)$ is even for all vertex $v$ in $G$, for each $i\in [q]$, we have $|[T,V(C_i)]| \equiv |[T,V(C_i)]| + f(V(C_i)) \equiv 1 \pmod{2}$.
	Therefore, both $|[S,T(C_i)]|$ and $|[T,V(C_i)]|$ is at least $1$ for each $i\in[q]$.
	By the inequality~\eqref{ineq},
	\begin{equation*}
		2 \leq
		q
		- \theta \sum_{i\in[q]} |[S,V(C_i)]|
		- (1-\theta) \sum_{i\in[q]} |[T,V(C_i)]|
		\leq 0,
	\end{equation*}
	which is a contradiction.
  
	\ref{item:m3}
	Since $d_G(v)$ is even for all vertex $v$ in $G$, for each $i\in[q]$, we have $|[S\cup T,V(C_i)]| = d_G(V(C_i)) - 2|E(C_i)| \equiv 0 \pmod{2}$, which implies that $|[S\cup T,V(C_i)]| \geq h_e$.
	By the inequality~\eqref{ineq},
	\begin{align*}
		2
		\leq
		q - \theta^* \sum_{i\in[q]} |[S\cup T, V(C_i)]|
		\leq
		q(1 - \theta^* h_e q).
	\end{align*}
	If $h_e \geq 1/\theta^*$, then $2 \leq q(1-\theta^* h_e) \leq 0$, which is a contradiction.
	Therefore, we may assume that $h_e < 1/\theta^* \leq \delta(G)$ and $\lambda_{\left\lceil \frac{2}{1-\theta^* h_e} \right\rceil} < \rho(\delta(G), \lceil 1/\theta^* \rceil -1)$.
	Then $q \geq \frac{2}{1-\theta^* h_e}$.
  
	The remaining proof is similar to the proof of~\ref{item:m1}, so we omit details.

	\ref{item:m4}
	Since $f(v)$ is even for all vertex $v$ in $G$, for each $i\in [q]$, we have $|[T,V(C_i)]| \equiv |[T,V(C_i)]| + f(V(C_i)) \equiv 1 \pmod{2}$, which implies that $|[T,V(C_i)]| \geq 1$.
	Let $q'$ be the number of components $C_i$ such that $|[S,V(C_i)]| = 0$.
	By relabelling, we may assume that $|[S,V(C_i)]| = 0$ for $i\in[q']$.
	Note that $|[T,V(C_i)]| \geq h_o$ for each $i\in[q']$.
	By the inequality~\eqref{ineq},
	\begin{align*}
	  2
	  &\leq
	  q
	  - \theta \sum_{i\in[q]} |[S,V(C)]|
	  - (1-\theta) \sum_{i\in[q]} |[T,V(C)]| \\
	  &\leq
	  q'
	  - (1-\theta) \sum_{i\in[q']} |[T,V(C)]| \\
	  &\leq
	  q' - (1-\theta) h_o q' = q'(1-(1-\theta)h_o).
	\end{align*}
	If $h_o > 1/(1-\theta)$, then $2\leq q'(1-(1-\theta)h_o) \leq 0$, which is a contradiction.
	Therefore, we can assume that $h_o < 1/(1-\theta) \leq \delta(G)$ and $\lambda_{\left\lceil \frac{2}{1-(1-\theta) h_o} \right\rceil} < \rho(\delta(G), \lceil 1/(1-\theta) \rceil -1)$.
	Then $q' \geq \frac{2}{1-(1-\theta)h_o}$.

	The remaining proof is similar to the proof of~\ref{item:m1}, so we omit details.

	\ref{item:m5}
	Let $g'$ and $f'$ be integer-valued functions on $V(G)$ such that $g'(v) = d_G(v) - f(v)$ and $f'(v) = d_G(v) - g(v)$.
	Then $g'(v) \equiv f'(v) \equiv 0 \pmod{2}$ and $g'(v) \leq (1-\theta) d_G(v) \leq f'(v)$ for all $v\in V(G)$.
	Note that $G$ has a $(g,f)$-parity factor if and only if it has a $(g',f')$-parity factor.
	Therefore, applying~\ref{item:m3} for $g'$ and $f'$, we conclude~\ref{item:m5}.
\end{proof}

\section{Tight examples}\label{sec:tight}
To show that the bounds in Theorem~\ref{thm:main} are sharp, we bring some definitions from~\cite{OW2011} and slightly modify them.

\begin{defn}\label{def:splicing}
Let $B$ be a graph with $\Delta(B) = a$ such that $\sum_{v\in V(B)} (a - d_B(v)) = b.$ If $u$ is a vertex of degree $b$ or $b+1$ in a graph $H$, then \emph{splicing} $B$ into $u$ means deleting $u$ and replacing each edge of the form $uw$ in $H$ with an edge from $w$ to a vertex of $B$, distributed so that each vertex of B now, except possibly only one vertex with degree $a+1$, has degree~$a$.
\end{defn}

It is readily shown that, from Definition~\ref{def:splicing}, the edge-connectivity of a graph obtained from $H$ by splicing $B$ into $u$ is at least the minimum of $\kappa'(B)$ and $\kappa'(H)$.

\begin{defn}\label{def:F}
For integers $r,h,l$ with $l \ge r > h \ge 1$, let $\mathcal{F}_{r,h,l}$ be the set of graphs obtained from the complete bipartite graph $K_{h,l}$ by splicing $H(r,h)$ into each vertex having degree $h$ in $K_{h,l}$.
For each $F \in \mathcal{F}_{r,h,l}$, we denote by $U(F)$ the set of vertices in $F$, which corresponds to the one of vertices with degree $l$ in $K_{h,l}$.
\end{defn}

\begin{obs}
	Every graph $F$ in $\mathcal{F}_{r,h,l}$ from Definition~\ref{def:F} satisfies the following:
\begin{itemize}%
    \item Every vertex of $U(F)$ has degree $l$ in $F$, and $F - U(F)$ is isomorphic to $l H(r,h)$.
    \item $|V(H(r,h))|$ is even if and only if $r$ is odd and $h$ is even.
    \item If $r$ is odd or $h$ is even, then every vertex of $V(F)\setminus U(F)$ has degree $r$ in $F$.
    \item If $r$ is even and $h$ is odd, then exactly one vertex of each component $C$ of $F- U(F)$ has degree $r+1$ in $F$, and every other vertex of $C$ has degree $r$ in $F$.
    \item If $r=l$, and $r$ is odd or $h$ is even, then $F$ is $r$-regular.
\end{itemize}
\end{obs}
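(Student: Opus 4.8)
The plan is to determine the degree sequence of $H(r,h)$ in each of the three cases of its definition, deduce how the splicing operation fills in the missing degrees, and then read off the five assertions one by one. Throughout, write $a = \Delta(H(r,h))$ and $b = \sum_{v \in V(H(r,h))}\bigl(a - d_{H(r,h)}(v)\bigr)$, the two quantities governing the splicing in Definition~\ref{def:splicing}. First I would record the degrees coming from the join structure: in the first case $H(r,h) = \overline{\lfloor h/2 \rfloor K_2} \vee K_{r+1-2\lfloor h/2 \rfloor}$ the vertices of the clique part have degree $r$ and the $2\lfloor h/2\rfloor$ vertices of $\overline{\lfloor h/2\rfloor K_2}$ have degree $r-1$; in the second case the $r+2-h$ vertices of $\overline{\frac{r+2-h}{2}K_2}$ have degree $r$ and the $h$ vertices of $\overline{C_h}$ have degree $r-1$; and in the third case, as already noted in the excerpt, a single vertex has degree $r-1$ and all others have degree $r$. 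Hence in every case $a = r$, every vertex of degree below $r$ has degree exactly $r-1$, and $b$ equals the number of such vertices, namely $b = 2\lfloor h/2\rfloor$ when at least one of $r,h$ is even and $b = h$ when both are odd.

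Next I would analyze the splicing of $H(r,h)$ into a degree-$h$ vertex $u$ of $K_{h,l}$. The degree-$h$ vertices are precisely the $l$ vertices on the larger side, so splicing reroutes the $h$ edges from $u$ (all of which go to $U(F)$) onto vertices of the corresponding copy of $H(r,h)$, raising the total degree of that copy by exactly $h$. Since the copy has total deficiency $b$ with every deficient vertex short by exactly $1$, the outcome depends only on whether $h = b$ or $h = b+1$: when $h = b$ every vertex of the copy attains degree $r$, and when $h = b+1$ exactly one vertex attains degree $r+1$ while all others attain $r$. Using the values of $b$ above, one checks that $h = b$ precisely when $h$ is even or $r$ is odd, and $h = b+1$ precisely when $r$ is even and $h$ is odd; this computation also verifies the hypothesis of Definition~\ref{def:splicing} that $u$ has degree $b$ or $b+1$.

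Finally I would deduce the five bullets. The first holds because the vertices of $U(F)$ are never spliced into, so each keeps its $l$ incident edges, exactly one redirected into each of the $l$ copies, and because splicing creates no edges between distinct copies, whence $F - U(F)$ is isomorphic to $l\,H(r,h)$. The second is the parity count $|V(H(r,h))| \in \{r+1, r+2\}$, equal to $r+1$ in the first case and $r+2$ (hence odd, as $r$ is odd there) otherwise, so it is even exactly when the first case applies and $r$ is odd, i.e.\ when $r$ is odd and $h$ is even. The third and fourth bullets are immediate from the splicing dichotomy applied to each of the $l$ copies, which are the components of $F - U(F)$. The fifth combines the first and third: if $r = l$ then $U(F)$ is $r$-regular, and if in addition $r$ is odd or $h$ is even then every vertex outside $U(F)$ also has degree $r$. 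The only genuine work is the case-by-case degree computation in $H(r,h)$; once the degree sequences and the value of $b$ are in hand, each assertion is a one-line deduction, and the point requiring the most care is the parity bookkeeping separating $h = b$ from $h = b+1$, as this is exactly what distinguishes the regular outcome (bullets iii and v) from the near-regular one (bullet iv).
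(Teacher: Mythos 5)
Your proof is correct: the degree counts in the three cases of $H(r,h)$ (giving $a=r$, all deficient vertices short by exactly $1$, and $b=2\lfloor h/2\rfloor$ or $b=h$), the verification that $d(u)=h\in\{b,b+1\}$ so the splicing is legitimate, and the dichotomy $h=b$ versus $h=b+1$ yielding the regular versus near-regular outcome all check out. The paper states this as an Observation without proof, and your direct case-by-case verification is exactly the intended argument, so there is nothing to compare beyond noting that you have supplied the details the authors left to the reader.
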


\begin{thm}\label{thm:tight}
	Let $r,h,l$ be integers with $l \ge r > h \ge 1$, and $F$ be a graph in $\mathcal{F}_{r,h,l}$.
	Then
	\begin{enumerate}[label=\rm(\roman*)]
		\item $\delta(F)=r$,
		\item $\kappa'(F)=h$, and
		\item $\lambda_{h+1}(F) = \cdots = \lambda_l(F)=\rho(r,h)$.
        \item %
        Let $b$ be the largest odd integer with $b < l/h$, and let $g$ and $f$ be integer-valued function on $V(F)$ such that $g(v) \leq f(v)$ and $g(v) \equiv f(v) \pmod{2}$ for each $v\in V(F)$, and $f(w) = b$ for each $w \in U(F)$, and $f(V(C)) \equiv 1 \pmod{2}$ for each component $C$ of $F - U(F)$.
	Then $F$ has no $(g,f)$-parity factor.
		In particular, $F$ has no odd $[1,b]$-factor whenever $r$ is even or $h$ is odd.
    \end{enumerate}
\end{thm}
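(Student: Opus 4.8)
The plan is to dispatch the four parts in order, reading (i) and (ii) off the splicing construction, deducing (iii) from interlacing, and proving (iv)---the substantive claim---by a single application of Lov\'asz's parity-factor theorem (Theorem~\ref{thm:parityfactor}).

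For (i) I would simply combine the degree data from the observation preceding the theorem: each vertex of $U(F)$ has degree $l$ and each vertex of $F-U(F)$ has degree $r$ or $r+1$, so $\delta(F)=\min\{l,r\}=r$ since $l\ge r$. For (ii), the bound $\kappa'(F)\le h$ follows by isolating any one component $C$ of $F-U(F)$: splicing redistributes the $h$ edges joining the corresponding degree-$h$ vertex of $K_{h,l}$ to the $h$ vertices of $U(F)$, so exactly $h$ edges leave $C$. For the reverse bound I would use the remark after Definition~\ref{def:splicing}, that each splice keeps the edge-connectivity at least $\min\{\kappa'(B),\kappa'(H)\}$; iterating over the $l$ splices gives $\kappa'(F)\ge\min\{\kappa'(K_{h,l}),\kappa'(H(r,h))\}$. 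Here $\kappa'(K_{h,l})=h$, and $H(r,h)$ is a dense join whose minimum degree $r-1$ is at least half its order, so a standard argument yields $\kappa'(H(r,h))=\delta(H(r,h))\ge r-1\ge h$; hence $\kappa'(F)=h$.

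For (iii) I would exploit that $F-U(F)\cong lH(r,h)$, so its spectrum is $l$ copies of the spectrum of the connected graph $H(r,h)$. Since $\lambda_1(H(r,h))=\rho(r,h)$ is simple and largest, the disjoint union satisfies $\lambda_1(F-U(F))=\dots=\lambda_l(F-U(F))=\rho(r,h)$ and $\lambda_{l+1}(F-U(F))<\rho(r,h)$. As $A(F-U(F))$ is a principal submatrix of $A(F)$ obtained by deleting the $h$ rows and columns of $U(F)$, Theorem~\ref{thm:interlacing} gives $\lambda_j(F)\ge\lambda_j(F-U(F))\ge\lambda_{j+h}(F)$ for every admissible $j$. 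For $h+1\le j\le l$ (nonempty since $l>h$), taking index $j$ in the first inequality gives $\lambda_j(F)\ge\rho(r,h)$, and taking index $j-h$ in the second gives $\rho(r,h)=\lambda_{j-h}(F-U(F))\ge\lambda_j(F)$; together these pinch $\lambda_j(F)=\rho(r,h)$.

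For (iv) I would apply Theorem~\ref{thm:parityfactor} with $S=U(F)$ and $T=\emptyset$, so that $d_F(T)=g(T)=|[S,T]_F|=0$ and $f(S)=\sum_{w\in U(F)}f(w)=hb$. The graph $F\setminus(S\cup T)=F-U(F)$ has exactly $l$ components, each a copy of $H(r,h)$; with $T=\emptyset$ the parity test for $q(S,T)$ reduces to $f(V(C))\equiv1\pmod 2$, which holds for every component by hypothesis, so $q(S,T)=l$. The Lov\'asz quantity is then $d_F(T)-g(T)+f(S)-|[S,T]_F|-q(S,T)=hb-l$, which is negative exactly because $b<l/h$ by the choice of $b$; hence the criterion fails and $F$ has no $(g,f)$-parity factor. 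For the ``in particular'' clause I would take $g\equiv1$ and $f\equiv b$: these are admissible since $b$ is odd and $1\le b$ (as $l/h>1$ forces $b\ge1$), the value $f(w)=b$ on $U(F)$ is automatic, and $f(V(C))=b\,|V(H(r,h))|$ is odd precisely when $|V(H(r,h))|$ is odd, which by the observation means $r$ is even or $h$ is odd; under that hypothesis the general conclusion applies and forbids an odd $[1,b]$-factor. The only genuinely delicate points are the index bookkeeping in the interlacing pinch of (iii) and the matching of the parity hypothesis $f(V(C))\equiv1$ to the oddness of $|V(H(r,h))|$ that makes $q(S,T)$ equal to the full $l$; everything else is direct substitution, so I expect the interlacing step to be the main thing to state with care.
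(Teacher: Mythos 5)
Your proposal is correct and follows essentially the same route as the paper's proof: degrees read off the construction for (i), the component cut $|[V(C),U(F)]_F|=h$ plus the splicing remark for (ii), interlacing against the induced subgraph $lH(r,h)$ (using $\lambda_1(H(r,h))=\rho(r,h)$) for (iii), and Lov\'asz's theorem with $S=U(F)$, $T=\emptyset$ yielding $bh-l<0$ for (iv). You even supply slightly more detail than the paper, e.g.\ justifying $\kappa'(H(r,h))\ge r-1$ and spelling out the choice $g\equiv 1$, $f\equiv b$ with the parity of $|V(H(r,h))|$ for the ``in particular'' clause, which the paper leaves implicit.
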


\begin{proof}
(i) By the construction of $F$, every vertex has degree $r, r+1,$ or $l$. Thus we have the desired result.

\vskip 0.05in

\noindent
(ii)
One may check that $\kappa'(K_{h,l}) = h$ and $\kappa(H(r,h)) = r-1$, which imply that $\kappa'(F) \geq h$.
For a component $C$ of $F- U(F)$, we have $|[V(C),U(F)]_F| = h$.
Therefore, $\kappa'(F) = h$.

\vskip 0.05in
\noindent
(iii) Since there are $l$ disjoint copies of $H(r,\eta)$ in $F$, by Theorem~\ref{thm:interlacing}, we have
$$\lambda_l(F) \ge \lambda_l(lH(r,h)) = \rho(r,\eta)=\lambda_1(lH(r,h))
\ge \lambda_{|V(F)|-(|V(F)|-h)+1}(F) = \lambda_{h+1}(F),$$
which gives $\lambda_{h+1}(F)= \cdots=\lambda_l(F)=\rho(r,h)$.

\vskip 0.05in
\noindent
(iv) Assume to the contrary that $F$ has a $(g,f)$-parity factor. Applying Theorem~\ref{thm:parityfactor} for $S= U(F)$ and $T=\emptyset$, we have
\begin{align*}
    0
    \le d_G(\emptyset) - g(\emptyset) + f(U(F)) - |[U(F),\emptyset]_F| - q
    = bh - q,
\end{align*}
where $q$ is the number of components $C$ in $F\setminus U(F)$ such that $f(V(C)) + |[\emptyset,V(C)]_F|$ is odd.
Then $q = l$ so $0\leq bh - l < 0$, which is a contradiction.
\end{proof}

Now, we explain why Theorem~\ref{thm:tight} makes the eigenvalue conditions of Theorem~\ref{thm:main} tight.

For an integer $r$ with $r\geq 2$ and a real $\theta$ with $1/r \le \theta \le 1/2$, let $h$ and $l$ be integers such that $h = \lceil 1/\theta \rceil - 1 < 1/\theta$ and $l \geq 2h/(1-\theta h)$.
Then $h \geq 1/\theta -1$, so $1/(1-\theta h) \geq 1/\theta$.
Therefore, $h+1 = \lceil 1/\theta \rceil \leq \lceil 2/(1-\theta h) \rceil \leq l$.
By Theorem~\ref{thm:tight}, we obtain a graph $G$ such that
\begin{enumerate}[label=\rm(\roman*)]
	\item $\delta(G) = r$,
	\item $\kappa'(G) = h < 1/\theta$,
	\item $\lambda_{\left\lceil \frac{2}{1-\theta h} \right\rceil} = \rho(r,h)$, and
	\item there are maps $g,f:V(G) \to \mathbb{Z}$ such that $g(v) \leq \theta d_G(v) \leq f(v)$ and $g(v) \equiv f(v) \pmod{2}$ for all $v\in V(G)$, and $G$ has no $(g,f)$-parity factor.
\end{enumerate}
This implies that the eigenvalue condition of Theorem~\ref{thm:main}-\ref{item:m1} is optimal.
If we additionally assume that $r$ and $l$ are even with $l \geq \frac{2}{1-\theta(h+1)}$, then the previous graph $G$ is a tight example for Theorem~\ref{thm:main}-\ref{item:m3}.
Suppose that $r$, $h$, and $l$ are odd with $l \geq \frac{2}{1-\theta(h+1)}$.
Then each vertex of $G$ has odd degree, and $|V(H(r,h))|$ is odd.
By (iv) of Theorem~\ref{thm:tight}, $G$ has no odd $[1,b]$-factor, where $b$ is the largest odd integer less than $l/h$.
Furthermore, $G$ has no $(g',f')$-parity factor where $g',f': V(G) \to \mathbb{Z}$ such that $g'(v) = d_G(v)-b$ and $f'(v) = d_G(v)-1$ for each $v\in V(G)$.
Therefore, the eigenvalue conditions of Theorem~\ref{thm:main}-\ref{item:m4} and~\ref{item:m5} are sharp.

\begin{rmk}
    By Theorem~\ref{thm:tight}, we could show that Theorem~1.3 in the paper~\cite{O2022} is sharp.
    Let $r$, $h$, and $b$ be positive integers with $r>h \geq 2$ and $b\equiv 1 \pmod{2}$.
    Suppose that
    \[
        r \equiv h \text{ and } \max\{b,h\} < \left\lceil \frac{2r}{r-bh} \right\rceil = r.\tag{**}
    \]
    Take $l = r$.
    Then applying Theorem~\ref{thm:tight}, we obtain an $r$-regular $h$-edge-connected graph $G$ such that $\lambda_r(G) = \rho(r,h)$ and $G$ has no odd $[1,b]$-factor.
    If $r$ is even, then $G$ has no odd $[r-b,r-1]$-factor, and if $r$ is odd, then $G$ has no even $[r-b,r-1]$-factor.
    For example, $(r,h,b) \in \{ (4k,2,2k-1), (2k+2,2k,3), (6k-1,3,2k-1), (6k-1,2k-1,3) \}$ with $k\geq3$ satisfies the condition~(**).
    This is a sharp example of Theorem~1.3 in O~\cite{O2022}.
\end{rmk}

\section*{Acknowledgements}
The second author would like to thank Dr. Sang-il Oum for fruitful discussions and also for providing a wonderful research environment at Institute for Basic Science.

\begin{thebibliography}{10}

\bibitem{Akiyama2011}
Jin Akiyama and Mikio Kano, \emph{Factors and factorizations of graphs},
  Lecture Notes in Mathematics, vol. 2031, Springer, Heidelberg, 2011, Proof
  techniques in factor theory. \MR{2816613}

\bibitem{Bollobas1985}
B.~Bollob\'{a}s, Akira Saito, and N.~C. Wormald, \emph{Regular factors of
  regular graphs}, Journal of Graph Theory \textbf{9} (1985), no.~1, 97--103.
  \MR{785652}

\bibitem{Brouwer2005}
Andries~E. Brouwer and Willem~H. Haemers, \emph{Eigenvalues and perfect
  matchings}, Linear Algebra and its Applications \textbf{395} (2005),
  155--162. \MR{2112881}

\bibitem{Brouwer2012}
\bysame, \emph{Spectra of graphs}, Universitext, Springer, New York, 2012.
  \MR{2882891}

\bibitem{Chen1996}
Ciping Chen, \emph{On matchings and factors of graphs}, Ph.D. thesis, 1996,
  Thesis (Ph.D.)--Wayne State University, p.~102. \MR{2695306}

\bibitem{Cioaba2009}
Sebastian~M. Cioab\u{a}, David~A. Gregory, and Willem~H. Haemers,
  \emph{Matchings in regular graphs from eigenvalues}, Journal of Combinatorial
  Theory. Series B \textbf{99} (2009), no.~2, 287--297. \MR{2482948}

\bibitem{Collatz1957}
Lothar Collatz and Ulrich Sinogowitz, \emph{Spektren endlicher {G}rafen},
  Abhandlungen aus dem Mathematischen Seminar der Universit\"{a}t Hamburg
  \textbf{21} (1957), 63--77. \MR{87952}

\bibitem{Gallai1950}
T.~Gallai, \emph{On factorisation of graphs}, Acta Mathematica. Academiae
  Scientiarum Hungaricae \textbf{1} (1950), 133--153. \MR{39980}

\bibitem{Godsil2001}
Chris Godsil and Gordon Royle, \emph{Algebraic graph theory}, Graduate Texts in
  Mathematics, vol. 207, Springer-Verlag, New York, 2001. \MR{1829620}

\bibitem{Gu2014}
Xiaofeng Gu, \emph{Regular factors and eigenvalues of regular graphs}, European
  J. Combin. \textbf{42} (2014), 15--25. \MR{3240134}

\bibitem{Hall1948}
Marshall Hall, Jr., \emph{Distinct representatives of subsets}, Bulletin of the
  American Mathematical Society \textbf{54} (1948), 922--926. \MR{27033}

\bibitem{Kano1985}
Mikio Kano, \emph{{$[a,b]$}-factorization of a graph}, Journal of Graph Theory
  \textbf{9} (1985), no.~1, 129--146. \MR{785656}

\bibitem{Lovasz1970}
L\'{a}szl\'{o} Lov\'{a}sz, \emph{The factorization of graphs}, Combinatorial
  {S}tructures and their {A}pplications ({P}roc. {C}algary {I}nternat. {C}onf.,
  {C}algary, {A}lta., 1969), Gordon and Breach, New York, 1970, pp.~243--246.
  \MR{0268071}

\bibitem{Lovasz1970a}
\bysame, \emph{Subgraphs with prescribed valencies}, Journal of Combinatorial
  Theory \textbf{8} (1970), 391--416. \MR{265201}

\bibitem{Lu2010}
Hongliang Lu, \emph{Regular factors of regular graphs from eigenvalues},
  Electron. J. Combin. \textbf{17} (2010), no.~1, Research Paper 159, 12.
  \MR{2745712}

\bibitem{O2022}
Suil O, \emph{Eigenvalues and $[a,b]$-factors in regular graphs}, J. Graph
  Theory (accepted).

\bibitem{OW2011}
Suil O and Douglas~B. West, \emph{Matching and edge-connectivity in regular
  graphs}, European Journal of Combinatorics \textbf{32} (2011), no.~2,
  324--329. \MR{2737597}

\bibitem{Petersen1891}
Julius Petersen, \emph{Die {T}heorie der regul\"{a}ren graphs}, Acta
  Mathematica \textbf{15} (1891), no.~1, 193--220. \MR{1554815}

\bibitem{Tutte1947}
W.~T. Tutte, \emph{The factorization of linear graphs}, The Journal of the
  London Mathematical Society \textbf{22} (1947), 107--111. \MR{23048}

\bibitem{Tutte1952}
\bysame, \emph{The factors of graphs}, Canadian Journal of Mathematics. Journal
  Canadien de Math\'{e}matiques \textbf{4} (1952), 314--328. \MR{48775}

\end{thebibliography}

\providecommand{\bysame}{\leavevmode\hbox to3em{\hrulefill}\thinspace}
\providecommand{\MR}{\relax\ifhmode\unskip\space\fi MR }
\providecommand{\MRhref}[2]{%
  \href{http://www.ams.org/mathscinet-getitem?mr=#1}{#2}
}
\providecommand{\href}[2]{#2}

\end{document}